\def\numwav{l}
\def\be{\begin{equation}}
\def\ee{\end{equation}}
\def\beaN{\setlength{\arraycolsep}{0.0em}\begin{eqnarray*}}
\def\eeaN{\end{eqnarray*}\setlength{\arraycolsep}{5pt}}
\def\bea{\setlength{\arraycolsep}{0.0em}\begin{eqnarray}}
\def\eea{\end{eqnarray}\setlength{\arraycolsep}{5pt}}
\def\dm{n}
\def\Rd{\RR^\dm}
\def\Zd{\ZZ^\dm}
\def\Td{\TT^\dm}
\def\CC{\mathbb{C}}
\def\NN{\mathbb{N}}
\def\RR{\mathbb{R}}
\def\ZZ{\mathbb{Z}}
\def\TT{\mathbb{T}}
\def\ver{\vrule height 7pt depth 2pt width .85pt}
\def\onenorm#1{\,{\ver}\,#1\,{\ver}\,}
\def\set{\Gamma}
\def\mod{{\rm mod}\,}
\newtheorem{definition}{Definition}
\newtheorem{theorem}{Theorem}
\def\beaN{\setlength{\arraycolsep}{0.0em}\begin{eqnarray*}}
\def\eeaN{\end{eqnarray*}\setlength{\arraycolsep}{5pt}}
\def\bea{\setlength{\arraycolsep}{0.0em}\begin{eqnarray}}
\def\eea{\end{eqnarray}\setlength{\arraycolsep}{5pt}}
\def\be{\begin{equation}}
\def\ee{\end{equation}}
\def\dm{n}
\def\Rd{\RR^\dm}
\def\Zd{\ZZ^\dm}
\def\Td{\TT^\dm}
\def\CC{\mathbb{C}}
\def\NN{\mathbb{N}}
\def\RR{\mathbb{R}}
\def\ZZ{\mathbb{Z}}
\def\TT{\mathbb{T}}
\def\ver{\vrule height 7pt depth 2pt width .85pt}
\def\sver{\vrule height 5pt depth .5pt width .66pt}
\def\onenorm#1{\,{\ver}\,#1\,{\ver}\,}
\def\onenorms#1{\,{\sver}\,#1\,{\sver}\,}
\def\du{{\rm d}}
\def\disp{\displaystyle}
\def\bks{\backslash}
\def\alp{\alpha}                
\def\gam{\gamma}
\def\ome{\omega}                
\title{Coset Sum: an alternative to the tensor product in wavelet construction}
\author{Youngmi~Hur and Fang~Zheng
\thanks{This work was supported in part by the National Science Foundation under Grant DMS-111590.}
\thanks{This paper was presented in part at the Asilomar Conference on Signals, Systems, and Computers, Pacific Grove, CA, November 2011, at the Joint Mathematics Meetings, Boston, MA, January 2012, at the SIAM Conference on Applied Linear Algebra, Valencia, Spain, June 2012, and at the International Conference on Wavelets and Applications, St. Petersburg, Russia, July 2012.}
\thanks{The authors are with the Department of Applied Mathematics and Statistics, The Johns Hopkins University, Baltimore, MD 21218-2682 USA (e-mail: hur@jhu.edu; fzheng2@jhu.edu).}}
\begin{document}

\maketitle

\begin{abstract}
A multivariate biorthogonal wavelet system can be obtained from a pair of
multivariate biorthogonal refinement masks in Multiresolution Analysis
setup. Some multivariate refinement masks may be decomposed into lower
dimensional refinement masks. Tensor product is a popular way
to construct a decomposable multivariate refinement mask from lower
dimensional refinement masks.

We present an alternative method, which we call coset sum, for
constructing multivariate refinement masks from univariate refinement
masks. The coset sum shares many essential features of
the tensor product that make it attractive in practice:
(1) it preserves the biorthogonality of univariate refinement masks,
(2) it preserves the accuracy number of the univariate refinement mask, and
(3) the wavelet system associated with it has fast algorithms for
computing and inverting the wavelet coefficients.
The coset sum can even provide a wavelet system with faster algorithms in certain cases than the tensor product.
These features of the coset sum suggest that it is worthwhile to
develop and practice alternative methods to the tensor product
for constructing multivariate wavelet systems. 
Some experimental results using 2-D images are presented to illustrate our findings.
\end{abstract}

\begin{keywords}
Coset sum, fast algorithm, interpolatory mask, refinement mask, tensor product, wavelet mask, wavelet system.
\end{keywords}

\section{Introduction}
\label{S:intro}
One of the most common tools for constructing wavelets is Multiresolution Analysis (MRA) \cite{Ma1}. In MRA, a multivariate biorthogonal wavelet system can be obtained from a pair of multivariate biorthogonal refinement masks. The tensor product has been the prevailing method for
deriving a pair of multivariate biorthogonal refinement masks from a pair of biorthogonal univariate refinement masks. 

In this paper we are interested in studying the operators that map lower dimensional refinement masks to higher dimensional refinement masks. Throughout this paper, the multidimensional (multi-D) refinement masks that can be decomposed into lower dimensional refinement masks by such operators are referred to as {\it decomposable}. One such operator is the tensor product. The multi-D refinement masks obtained via tensor product are called {\it tensor product (or separable) refinement masks}. Since the word ``separable'' is reserved for the tensor product by the definition in the literature, we use the word ``decomposable'' to indicate more general case than the tensor product.
It should be noted that a ``nonseparable'' refinement mask only means it is
not a tensor product refinement mask, and it can still be a ``decomposable'' refinement mask. Tensor product can also be used to construct multi-D wavelet masks, which are called {\it tensor product (or separable) wavelet masks} (cf.~\S\ref{subS:tensorproduct}).

In MRA setup, construction of multi-D biorthogonal wavelet systems can be done by two steps: (i) construction of multi-D biorthogonal refinement masks (or refinable functions); (ii) construction of multi-D wavelet masks.
To construct a nonseparable multi-D wavelet system, one can try making the refinement masks nonseparable in step (i) or making wavelet masks nonseparable in step (ii). Since, once a pair of multivariate biorthogonal refinement masks are given, the matrix extension problem of finding wavelet masks can always be solved by using Quillen-Suslin theorem (see, for example, \cite{RiS2}), the main effort so far for constructing nonseparable wavelets has been made in step (i). However, we note that Quillen-Suslin theorem serves only as a guide since in the process of determining the wavelet masks, some parameters still need to be specified. 

\pubidadjcol 

Although there have been many methods for constructing nonseparable multi-D wavelets
\cite{KV1,CS,KV2,MX,RiS1,HeL,JRS,SS,CMPX,CMX,HYY,QLCCR,YDCC,Z}, 
constructing nonseparable multi-D wavelet systems is highly nontrivial.
Many of these methods work only for low spatial dimensions (2-D or 3-D) and they cannot be easily extended to other dimensions. Others assume that the wavelets or refinable functions have a special form (e.g. the refinable function has a box spline factor) and cannot be easily generalized to other cases.

\begin{table*}[t]
\caption{Comparison between tensor product and coset sum (spatial dimension $\dm\ge 2$)}
\centering
\begin{tabularx}{\textwidth}{XX}
\toprule
\textbf{Tensor product $\mathcal{T}_\dm$} ($R$, $\tilde{R}$: univariate refinement masks)& \textbf{Coset sum $\mathcal{C}_\dm$} ($R$, $\tilde{R}$: univariate refinement masks; $\tilde{R}$: {\it interpolatory}) \\[1ex]
\midrule
$\mathcal{T}_\dm[R]$ can be decomposed into the {\it product} of $R$&
$\mathcal{C}_\dm[R]$ can be decomposed into the {\it sum} of $R$\\[1ex]
\midrule
$\mathcal{T}_\dm[R]$ is interpolatory iff $R$ is interpolatory&
$\mathcal{C}_\dm[R]$ is interpolatory iff $R$ is interpolatory\\[1ex]
\midrule
$\mathcal{T}_\dm[R]$ and $\mathcal{T}_\dm[\tilde{R}]$ are biorthogonal iff $R$ and $\tilde{R}$ are biorthogonal &
$\mathcal{C}_\dm[R]$ and $\mathcal{C}_\dm[\tilde{R}]$ are biorthogonal iff $R$ and $\tilde{R}$ are biorthogonal\\[1ex]
\midrule
$\mathcal{T}_\dm[R]$ and $R$ have the same accuracy number&
$\mathcal{C}_\dm[\tilde{R}]$ and $\tilde{R}$ have the same accuracy number\\[1ex]
\midrule
$\mathcal{T}_\dm[R]$ can be decomposed into {\it non-univariate} refinement masks &
$\mathcal{C}_\dm[R]$ can be decomposed {\it only} into {\it univariate} refinement masks \\[0.5ex]
\midrule
Complexity constant in associated wavelet algorithm {\it increases with} $\dm$  &
Complexity constant in associated wavelet algorithm is {\it independent of} $\dm$ \\[0.5ex]
\bottomrule
\end{tabularx}
\label{table:tensor&coset}
\end{table*}

One of the disadvantages of the above approaches for constructing nonseparable wavelets is that they construct a pair of multi-D biorthogonal refinement masks essentially from scratch, which can be quite complicated, especially for high spatial dimensions. A simpler way to obtain multi-D biorthogonal refinement masks is to use an operator that maps $1$-D biorthogonal refinement masks to multi-D biorthogonal decomposable refinement masks. Most of the existing nonseparable wavelet construction methods (e.g. \cite{TK,PKVA,BS,SK,AL,KS,Sha}) that use decomposable refinement masks employ operators such as the McClellan transform for quincunx or other $2$-channel sampling lattices. 

Most multi-D wavelet systems that are used in practice nowadays are separable wavelet systems constructed by the tensor product of 1-D wavelet systems.
In \S\ref{subS:tensorproduct} we briefly discuss the use of tensor product in constructing biorthogonal wavelet systems. As we can see from there, 
the tensor product construction of wavelet systems is extremely simple.
This is one of the major reasons the tensor product has been so popular in constructing multi-D wavelets in practice.
However the separable wavelet systems have limitations:  (i) they have a strong directional bias along lines parallel to the coordinate directions, (ii) they are not very local\footnote{One way to measure the localness of a wavelet system is to compute the sum of the volumes of the supports of its mother wavelets (cf.~\cite{HR2,HR3}).}.

Our goal in this paper is to present an alternative method to the tensor product for constructing decomposable multi-D refinement masks. 
We call the new method as {\it coset sum}.
We show that, under an appropriate circumstance, the coset sum shares many attractive features of the tensor product. First, it preserves the biorthogonality of univariate refinement masks. Second, it preserves the accuracy number of the univariate refinement mask.
Third, it has a corresponding wavelet system which has fast algorithms for computing and inverting the wavelet coefficients. In fact, it turns out that these algorithms are faster, in certain cases, than the known algorithms based on tensor product wavelet systems. 

Let us elaborate on the last point in more detail. Suppose that we consider two wavelet systems that are constructed from the same pair of $1$-D biorthogonal refinement masks, by using tensor product and coset sum. For the tensor product wavelet system, the associated algorithm has complexity $(\alpha+\beta)\dm N$ (cf.~\S\ref{subS:tensorproduct}), where $\alpha$ and $\beta$ are the number of nonzero coefficients of the $1$-D lowpass filters for decomposition and reconstruction, respectively, $\dm$ is the spatial dimension, and $N$ is the size of an initial data to be analyzed. Thus, the constant in the complexity bound  (cf.\ Complexity discussion in \S\ref{subS:Algorithms} for the definition) in this case is $(\alpha+\beta)\dm$ and it grows linearly with the spatial dimension. On the other hand, as we can see from \S\ref{subS:Algorithms}, the complexity constant of the algorithm associated with the coset sum wavelet system we construct in this paper has complexity constant ${3\over 2}\alpha + 2\beta$, which is smaller than $(\alpha+\beta)\dm$ as long as $\dm\ge 2$. We note that the complexity constant for the coset sum case does not increase even if the spatial dimension increases. For more details, we refer to \S\ref{subS:Algorithms}.

The main difference between the coset sum method and the tensor product method is that a ``sum'' is used in obtaining the coset sum multi-D refinement masks instead of a ``product'' used in the tensor product refinement masks. Another difference is that, on the contrary to the tensor product case, the coset sum refinement mask cannot be decomposed into non-univariate refinement masks. Table~\ref{table:tensor&coset} summarizes the comparison between the tensor product and the coset sum. 

Some experimental results using 2-D images are included to show the potential usefulness of the coset sum wavelet systems we construct in this paper (cf.~\S\ref{subS:Experiments}). They show that our wavelet systems can be potentially useful for effectively approximating a certain class of images with strong directional content. They also reveal some of the limitations of our wavelet systems, which include the lack of rotational symmetry \cite{RRS}. For details, we refer to \S\ref{subS:Experiments}.

The rest of the paper is organized as follows.
In \S\ref{S:notation} we briefly overview some relevant concepts on wavelet construction.
In \S\ref{S:cosetsum} we introduce the coset sum method and discuss its
properties. In \S\ref{S:waveletsystems} we also introduce a particular class of coset sum wavelet systems, together with
the associated fast algorithms and some experimental results using our wavelet systems. We summarize our results and present some observations in \S\ref{S:summary}. Appendix contains technical details including all the proofs of the theorems in this paper.

\section{Preliminaries}
\label{S:notation}
In this section we review some relevant concepts.
\subsection{Refinement masks and wavelet masks}
\label{subS:masks}
In this paper we refer to a Laurent trigonometric polynomial as 
a {\it mask}, and a mask $\tau$ with $\tau(0)=1$ as a {\it refinement mask}. Refinement masks can be used to obtain refinable functions (see, for example, \cite{CDM}), which can in turn be used to construct wavelet systems \cite{Ma1}. 

Refinement masks $\tau$ and $\tau^\du$ are {\it biorthogonal} if they satisfy the following biorthogonal relation:
\be
\label{eq:refinebiormask}
\sum_{\gam\in\pi\set}(\overline{\tau}\tau^\du)(\ome+\gam)=1,\quad \forall\ome\in\Td:=[-\pi,\pi]^\dm,
\ee
where ${\set}:=\{0,1\}^\dm$ and the overline is used to denote the
complex conjugate. In this case, we refer to $\tau$ and $\tau^\du$ as {\it primal and dual refinement masks}, respectively.

A refinement mask
$\tau$ is {\it interpolatory} if the condition
$$
\sum_{\gam\in\pi\set} \tau(\ome+\gam)=1
$$
holds. Thus refinement masks $\tau$ and $\tau^\du$ are biorthogonal if and only if $\overline{\tau}\tau^\du$ is interpolatory. Interpolatory masks are
widely used in subdivision schemes and wavelet constructions (for
example, see \cite{HJ} and references therein).

In this paper we say that  a filter $h:\Zd\to\RR$ is {\it associated with} a mask $\tau$ if $h$ and $\tau$ are connected via the relation $\tau(\ome)={1\over 2^\dm} \sum_{k\in\Zd} h(k)e^{-ik\cdot\ome}$ for $\ome\in\Td$.

It is straightforward to see that $\tau$ is interpolatory if and only if the associated filter $h$ satisfies
\be
\label{eq:interpfilter}
h(k)
  =\cases{1,& if $k=0$,\cr
          0,& if $k\in 2\Zd\bks 0$,\cr}
\ee
to which we refer as the interpolatory condition for the filter.

For a refinement mask $\tau$, the number of zeros of $\tau$ at $\gam\in\pi\set'$ with $\set':=\set\bks 0=\{0,1\}^\dm\bks0$
is referred to as the {\it accuracy number} \cite{SN}. Throughout the paper we assume that all refinement masks have at least accuracy number one,
since almost all of the refinement masks used in practice satisfy this condition.

We recall that the Laurent polynomials $\{t_j, t_j^\du:j=1,\cdots,\numwav\}$ are called the {\it wavelet masks} associated with a pair of biorthogonal refinement masks $(\tau, \tau^\du)$ if they satisfy the Mixed Unitary Extension Principle (MUEP) conditions \cite{RS2}: for every $\ome\in\Td$,
\be
\overline{\tau(\ome+\gam)}\tau^\du(\ome)
+\sum_{j=1}^{\numwav}\overline{t_j(\ome+\gam)}t^\du_j(\ome)
 =\cases{
1,&if $\gam=0$,\cr 0,&if $\gam\in \pi\set'$.\cr}\label{eq:mUEP}
\ee
We refer to $t_j$, $j=1,\cdots,\numwav$, and $t_j^\du$, $j=1,\cdots,\numwav$, as {\it primal and dual wavelet masks}, respectively. When $\numwav=2^\dm-1$, the masks that satisfy the MUEP conditions can be used to construct biorthogonal wavelet systems. We refer to such $(\tau,(t_j)_{j=1,\cdots,2^\dm-1})$ and
$(\tau^\du,(t_j^\du)_{j=1,\cdots,2^\dm-1})$ as
the {\it combined biorthogonal masks}. A (MRA-based) {\it biorthogonal wavelet system} is then obtained from these combined biorthogonal masks, under some simple additional conditions \cite{CDF,HR4}.

For a wavelet mask $t$, the number of zeros of
$t$ at $\omega=0$ is
referred to as the {\it number of (discrete) vanishing moments} \cite{CHR}.
It is well known (see, for example, \cite{CHR}) that for the combined biorthogonal masks
$(\tau,(t_j)_{j=1,\cdots,2^\dm-1})$ and $(\tau^\du,(t_j^\du)_{j=1,\cdots,2^\dm-1})$ whose refinement masks have at least $m$ accuracy, every primal wavelet mask $t_j$ and dual wavelet mask $t_j^\du$, $j=1,\cdots, 2^\dm-1$, has at least $m$ vanishing moments. The number of vanishing moments is closely related to the approximation performance of the wavelet system \cite{Me}.

\subsection{Tensor product wavelet construction}
\label{subS:tensorproduct}
We recall that the $\dm$-D tensor product (or separable) refinement mask  from $\dm$ (possibly distinct) univariate refinement masks $R_1, R_2, \cdots, R_\dm$ can be written as,
for  $\ome=(\ome_1,\ome_2,\cdots,\ome_\dm)\in\Td$,
\be
\label{eq:tensorproductrefinementmask}
\mathcal{T}_\dm[R_1,R_2,\cdots,R_\dm](\ome):=R_1(\ome_1)R_2(\ome_2)\cdots R_\dm(\ome_\dm).
\ee
When $R=R_1=R_2=\cdots=R_\dm$, we also use the notation $\mathcal{T}_\dm[R]$.
If we let $H$ and $h$ be the filters associated with the masks $R$ and
$\mathcal{T}_\dm[R]$ respectively, they satisfy, for $k=(k_1,k_2,\cdots,k_\dm)\in\Zd$,
$$
h(k)=H(k_1)H(k_2)\cdots H(k_\dm).
$$
It is well known that the $n$-D refinement masks constructed using tensor product preserve many useful properties of univariate refinement masks. For example, if we let $R$ and $\tilde{R}$ be univariate refinement masks, then
\begin{enumerate}[(i)]
\item $\mathcal{T}_\dm[R]$ is interpolatory if and only if $R$ is interpolatory,
\item $\mathcal{T}_\dm[R]$ and $\mathcal{T}_\dm[\tilde{R}]$ are biorthogonal if and only if $R$ and $\tilde{R}$ are biorthogonal,
\item $\mathcal{T}_\dm[R]$ and $R$ have the same accuracy number.
\end{enumerate}
Now we pose the following question. Can we find another method that
satisfies all of the above properties? An affirmative answer is
provided by the coset sum, which we introduce and study in the next section.
Before introducing the coset sum, let us review the usual approach for constructing biorthogonal wavelet systems.

Construction of 1-D biorthogonal wavelet systems is well understood. Given a pair of 1-D biorthogonal refinement masks $S_0$ and $U_0$, one sets the wavelet masks as
\be
\label{eq:1Dwaveletmasks}
S_1(\ome):=e^{-i\ome}\overline{U_0(\ome+\pi)},\quad U_1(\ome):=e^{-i\ome}\overline{S_0(\ome+\pi)}
\ee
for $\ome\in\TT$. Then the univariate pairs $(S_0,S_1)$ and $(U_0,U_1)$ satisfy the MUEP conditions (cf.~(\ref{eq:mUEP})) \cite{CDF}.

On the other hand, given a pair of multivariate biorthogonal refinement masks, constructing a multivariate biorthogonal wavelet system is not so trivial since one needs to find $2^\dm-1$ primal wavelet masks $t_j$'s and $2^\dm-1$ dual wavelet masks $t_j^\du$'s.

The usual construction of multi-D biorthogonal wavelet systems is done by the tensor product. Given a pair of 1-D biorthogonal refinement masks $S_0$ and $U_0$, one sets the $\dm$-D refinement masks as
$$\tau:=\mathcal{T}_\dm[S_0],\quad \tau^\du:=\mathcal{T}_\dm[U_0]$$
and the $\dm$-D wavelet masks as
$$t_\nu=\mathcal{T}_\dm[S_{\nu_1},S_{\nu_2},\cdots,S_{\nu_n}],\quad
t^\du_\nu=\mathcal{T}_\dm[U_{\nu_1},U_{\nu_2},\cdots,U_{\nu_n}]$$
for all $\nu=(\nu_1,\nu_2,\cdots,\nu_n)\in\set'$. Then the two refinement masks $\tau$ and $\tau^\du$ are also biorthogonal, and $(\tau,(t_\nu)_{\nu\in\set'})$ and $(\tau^\du,(t_\nu^\du)_{\nu\in\set'})$ satisfy the MUEP conditions (cf.~(\ref{eq:mUEP})). Here $\set'=\{0,1\}^\dm\bks0$ is used as before, and the univariate masks $S_1$ and $U_1$ are the ones defined in (\ref{eq:1Dwaveletmasks}). The biorthogonal wavelet systems obtained from these masks are called {\it tensor product (or separable) wavelet systems.}

It is well known that tensor product wavelet systems have fast algorithms for computing and inverting wavelet coefficients (see, for example, \cite{Ma2}), to which we refer as the {\it fast tensor product wavelet algorithms}. These algorithms have linear complexity $O(N)$, where $N$ is the size of the input data.
More precisely, if $\alp$ is the number of nonzero entries of the filter associated with $S_0$ and $\beta$ is the number of nonzero entries of the filter associated with $U_0$, then the algorithms for computing and inverting the corresponding tensor product wavelet coefficients have complexity $(\alp+\beta)nN$, where $n$ is the spatial dimension. In particular, the constant in the complexity bound is $(\alp+\beta)n$ and it increases linearly as the spatial dimension increases. 


\section{Coset sum}
\label{S:cosetsum}
\subsection{Introduction to coset sum}
\label{subS:cosetsumrefinementmasks}
We present an alternative method, called {\it coset sum}, to the tensor product in wavelet construction. Instead of the ``product'' in the tensor product, we propose to use a ``sum'' to construct multivariate refinement masks from univariate refinement masks.

Let $R$ be a univariate refinement mask and let $H$ be the univariate filter associated with $R$. For $\nu\in\set'$, the map
$$\Td\to\CC:\ome\mapsto {1\over 2^{\dm-1}}R(\ome\cdot\nu),$$
where $\ome\cdot\nu$ is the inner product in $\Rd$, is an $\dm$-D Laurent trigonometric polynomial. The normalization factor ${1\over 2^{\dm-1}}$ is used to place $R(\ome\cdot\nu)$ in the $\dm$-D space. In terms of filters, the above can be understood as aligning the 1-D filter $H$ along the $\nu$ direction:
$$\Zd\to\RR:k\mapsto \cases{H(K),&\mbox{if $k=K\nu$ for some $K\in\ZZ$
},\cr
         0,             & \mbox{otherwise}
\cr}
$$
Since we want to consider all the directions in $\set'$, a possible
candidate for the coset sum definition can be given as
$$
\Td\to\CC:\ome\mapsto A+{1\over
2^{\dm-1}}\sum_{\nu\in\set'}R(\ome\cdot\nu).
$$
Since we want the coset sum to map a 1-D {\it refinement} mask to an $\dm$-D {\it refinement} mask, by plugging in $\ome=0$, we obtain $A=-1+{1\over 2^{\dm-1}}$ and get to the following definition. 
\begin{definition}
\label{def:cosetsum}
We define the coset sum
$\mathcal{C}_\dm$ that maps a 1-D refinement mask
$R$ to an $\dm$-D refinement mask $\mathcal{C}_n[R]$ as follows:
for $\ome\in\Td$
$$
\mathcal{C}_\dm[R](\ome)
:=
{1\over
2^{\dm-1}}\left(1-2^{\dm-1}
+\sum_{\nu\in\set'}R(\ome\cdot\nu)\right),
$$
where $\set'=\set\bks0=\{0,1\}^\dm\bks0$.
\endproof
\end{definition}

\medskip\noindent
{\bf Remark 1.}
We call the refinement mask obtained by the coset sum method as the {\it coset sum refinement mask}.
The set $\set=\{0,1\}^\dm$ used in the definition is a complete set of representatives of the distinct cosets (hence the name ``coset sum'') of the quotient group $\Zd/2\Zd$. It is easy to observe that, because $\dm$-D masks are $2\pi$-periodic, the set $\{0,1\}^\dm$ used in this paper prior to the above definition (for example, for biorthogonality condition, interpolatory condition, definition of accuracy number, and MUEP conditions) can be replaced, without changing the meaning of the statements, by any other complete set of representatives of the distinct cosets of the quotient group $\Zd/2\Zd$ as long as the set contains $0$. As a result, the set $\{0,1\}^\dm$ used in the above coset sum definition can be replaced by any such an alternative set. The set $\{0,1\}^\dm$ is chosen for the discussion in this paper (with the exception of Example 3 below and discussions in \S\ref{subS:Experiments}) because it makes the support of the associated filter the smallest. Depending on applications, choosing a different set of representatives can make more sense. We emphasize that even if all the results in our paper (including Theorem \ref{thm:main}, \ref{thm:cosetsumwavelet}, and the fast coset sum wavelet algorithms in later part of the paper) are presented using the set $\set=\{0,1\}^\dm$, they will stay intact for other choices for $\set$.
\endproof

\medskip\noindent
{\bf Remark 2.}
We recall that the sum in the left-hand side of the biorthogonality condition in (\ref{eq:refinebiormask}) is taken over the set $\pi\set$, which can be considered as a set of coset representatives of $2\pi({1\over 2}\Zd/\Zd)$. The set of coset representatives has been previously used in the wavelet literature, mostly in relation with this biorthogonality condition. For example, a new algorithm called a coset by coset (CBC) is proposed in \cite{Han} for obtaining dual masks with arbitrary number of accuracy given an interpolatory primal mask, and the coset representatives are used in \cite{N} for an explicit, flexible, and easy implementation of interpolatory subdivision schemes. 
\endproof

\medskip
The coset sum for the first few low dimensions are given as follows:
$$\mathcal{C}_1[R](\ome_1)=R(\ome_1),$$
$$\mathcal{C}_2[R](\ome_1,\ome_2)
 ={1\over 2}\left\{-1+R(\ome_1)+R(\ome_2)+R(\ome_1+\ome_2)\right\},$$
\beaN
&&\mathcal{C}_3[R](\ome_1,\ome_2,\ome_3)
={1\over4}\{-3+R(\ome_1)+R(\ome_2)+R(\ome_1+\ome_2)\\
&&+R(\ome_3)+R(\ome_1+\ome_3)+R(\ome_2+\ome_3)+R(\ome_1+\ome_2+\ome_3)\}.
\eeaN


We note that the coset sum formula in the above definition can also be written as
\be
\label{eq:simpleform}
-1+{1\over 2^{\dm-1}}\sum_{\nu\in\set}R(\ome\cdot\nu)
\ee
or
\be
\label{eq:specialform}
{1\over 2^{\dm-1}}\left({\disp 1\over \disp 2}
+\sum_{\nu\in\set'}\left(R(\ome\cdot\nu)-{\disp 1\over \disp
  2}\right)\right).
\ee

The filter $h$ associated with the coset sum refinement mask $\mathcal{C}_\dm[R]$ is connected to the univariate filter $H$ via
\be
\label{eq:filter}
h(k)=
\cases{H(K),\quad\mbox{if $k=K\nu$ for some $K\in\ZZ\bks 0,
\nu\in\set'$},\cr
2^\dm-(2^\dm-1)(2-H(0)),\quad\mbox{if $k=0$},\cr
          0,           \quad\mbox{for all other $k\in\Zd$}.\cr}
\ee

If the univariate filter $H$ associated with $R$ is interpolatory, 
the $\dm$-D filter $h$ associated with $\mathcal{C}_\dm[R]$ is also interpolatory and it can be expressed as
$$
h(k)=\cases{H(K),&\mbox{if $k=K\nu$ for some $K\in\ZZ,
\nu\in\set'$},\cr
         0,             & for all other \mbox{$k\in\Zd$}.\cr}
$$
In particular, the restriction of the $\dm$-D filter $h$ to $\nu$ direction, for each $\nu\in\set'$, is the 1-D filter $H$. 

Now we give a few very simple examples of constructing multi-D refinement filters from univariate refinement filters.

\begin{figure}[t]
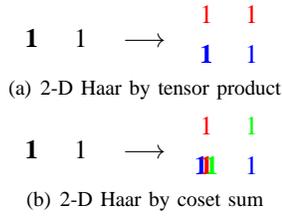

\centering
\subfigure[2-D Haar by tensor product]{
~~~~~{\bf1} \quad1 \quad
$\longrightarrow$~
\begin{tabular}{cc}
\textcolor{red}   1& \textcolor{red}1~~~~~\\[3pt]
\bf\textcolor{blue} 1& \textcolor{blue} 1~~~~~\\[3pt]
\end{tabular}
}
\qquad
\subfigure[2-D Haar by coset sum]{
~~~~~{\bf1} \quad1 \quad
$\longrightarrow$~~~
\put(4,-6) {\textcolor{blue}{\bf1}}
\put(6,-6) {\textcolor{red}{\bf1}}
\put(8,-6) {\textcolor{green}{\bf1}}
\begin{tabular}{cc}
\textcolor{red}   1& \textcolor{green}1~~~~~\\[3pt]
 & \textcolor{blue} 1~~~~~\\[3pt]
\end{tabular}
}
\caption{Constructions of 2-D Haar refinement filter (Tensor product and Coset sum) (cf.~Example 1)}
\label{figure:Haar}
\end{figure}

\medskip\noindent
{\bf Example 1: $n$-D Haar refinement filter: the only filter that can be obtained using either the tensor product or the coset sum.} Consider the 2-D Haar refinement filter
$$
h(k)
  =\cases{1,& if $\ k=(0,0), (1,0), (0,1)$ or $(1,1)$,\cr
          0,& otherwise.\cr}
$$
Let $H$ be the 1-D Haar refinement filter 
$$
H(K)
  =\cases{1,& if $\ K=0$ or $K=1$,\cr
          0,& otherwise.\cr}
$$
Then $h$ can be obtained from $H$ either by
\begin{enumerate}[(I)]
\item (Tensor Product Case) aligning the filter $H$ along $y=0$ line ($x$-axis) and $y=1$ line (see Figure\footnote{In the figures of filters drawn in this paper, the bold-faced number is used to represent the value of the filter at the origin.}~\ref{figure:Haar}(a)), or by
\item (Coset Sum Case) aligning the filter $H$ along $y=0$ line ($x$-axis), $x=0$ line ($y$-axis), and $y=x$ line (see Figure~\ref{figure:Haar}(b)).
\end{enumerate}
Since the support of the 2-D tensor product refinement filter will always be a rectangle and the support of the 2-D coset sum refinement filter will always be the union of three line segments in different directions, it is easy to see that, up to the integer translation, the 2-D Haar refinement filter is the only 2-D filter that can be obtained using either the tensor product or the coset sum.
It is straightforward to show that, for arbitrary spatial dimension $n$, the $\dm$-D Haar refinement filter is the only filter that can be obtained using either the tensor product or the coset sum.
\endproof

\medskip\noindent
{\bf Example 2: Refinement filter associated with an $\dm$-D piecewise-linear box spline.} Let us consider the 2-D refinement filter $h$ associated with a 2-D piecewise-linear box spline \cite{BHR}:
$$
h(k)
  =\cases{1,& if $\ k=(0,0)$,\cr
  {1\over 2},& if $\ k=\pm(1,0)$, $\pm(0,1)$, or $\pm(1,1)$,\cr
          0,& otherwise.\cr}
$$
Let $H$ be the refinement filter associated with a 1-D piecewise-linear spline:
$$
H(K)
  =\cases{1,& if $\ K=0$,\cr
  {1\over 2}, & if $\  K=\pm 1$,\cr
          0,& otherwise.\cr}
$$
Then $h$ can be obtained from $H$ by
aligning the filter $H$ along $y=0$ line ($x$-axis), $x=0$ line ($y$-axis), and $y=x$ line (see Figure~\ref{figure:spline}). In other words, $h=\mathcal{C}_2[H]$. In fact, it is easy to see that for the $\dm$-D refinement filter $h$ associated with an $\dm$-D piecewise-linear box spline, we have $h=\mathcal{C}_\dm[H]$.
\endproof

\begin{figure}[t]
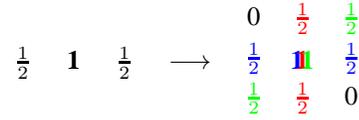

\centering
$1\over2$ \quad {\bf1} \quad $1\over2$ \quad
$\longrightarrow$~~~
\put(23,0) {\textcolor{blue}{\bf1}}
\put(25,0) {\textcolor{red}{\bf1}}
\put(27,0) {\textcolor{green}{\bf1}}
\begin{tabular}{ccc}
0                             & $\textcolor{red}{1\over2}$ & $\textcolor{green}{1\over2}$\\[3pt]
$\textcolor{blue} {1\over2}$ &                          & $\textcolor{blue} {1\over2}$\\[3pt]
$\textcolor{green}{1\over2}$ & $\textcolor{red}{1\over2}$ &   0                        \\[3pt]
\end{tabular}
\caption{Construction of 2-D piecewise-linear box spline refinement filter (Coset sum) (cf.~Example 2)}
\label{figure:spline}
\end{figure}

\medskip\noindent
{\bf Example 3: Refinement filter supported on different directed line segments.} 
We consider $\dm=2$ and choose the same univariate filter $H$ as in Example 2, but choose the 2-D filter $h$ differently:
$$
h(k)
  =\cases{1,& if $\ k=(0,0)$,\cr
  {1\over 2},& if $\ k=\pm(1,2)$, $\pm(2,1)$, or $\pm(-1,1)$,\cr
          0,& otherwise.\cr}
$$
Then $h=\mathcal{C}_2[H]$ with $\set$ chosen differently (cf.~Remark 1 after Definition~\ref{def:cosetsum}): 
$$\set=\{(0,0),(2,1),(1,2),(-1,1)\}.$$
In particular, $h$ can be obtained from $H$ by aligning the filter $H$ along $y=x/2$ line, $y=2x$ line, and $y=-x$ line (see Figure~\ref{figure:differentspline}). Note that the filter $h$ is supported on the line segments that are not parallel to the coordinate directions.  
\endproof

\subsection{Properties of coset sum refinement masks}
\label{subS:cosetsumproperties}
In this subsection, we study the properties of the multi-D refinement masks obtained by the coset sum method.

The following theorem shows that the refinement masks obtained by the coset sum share many important properties with the tensor product refinement masks.
\begin{theorem}
\label{thm:main}
Let $\mathcal{C}_\dm$ be the coset sum, and let $R$ and $\tilde{R}$ be univariate refinement masks.
\begin{enumerate}[(a)]
\item $\mathcal{C}_\dm[R]$ is interpolatory if and only if $R$ is interpolatory.
\item Suppose that one of $R$ and $\tilde{R}$ is interpolatory. Then $\mathcal{C}_\dm[R]$ and $\mathcal{C}_\dm[\tilde{R}]$ are biorthogonal if and only if $R$ and $\tilde{R}$ are biorthogonal.
\item Suppose that $R$ is interpolatory. Then $\mathcal{C}_n[R]$ and $R$ have the same accuracy number.
\end{enumerate}
\end{theorem}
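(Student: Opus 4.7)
Parts (a) and (c) reduce to direct evaluations of the coset sum formula, and (b) is the bulk computation that uses the technique of (a) repeatedly. For (a) I apply the simplified form (\ref{eq:simpleform}) and sum over $\gamma\in\pi\set$. The inner sum $\sum_{\gamma\in\pi\set}R((\omega+\gamma)\cdot\nu)$ equals $2^\dm$ when $\nu=0$; when $\nu\in\set'$, reduction of $\gamma\cdot\nu$ modulo $2\pi$ is a $2^{\dm-1}$-to-one map from $\pi\set$ onto $\{0,\pi\}$ (a parity count over the support of $\nu$), so the inner sum equals $2^{\dm-1}\bigl[R(\omega\cdot\nu)+R(\omega\cdot\nu+\pi)\bigr]$. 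Substituting back and writing $f(\xi):=R(\xi)+R(\xi+\pi)-1$, I obtain
\be
\label{eq:planA}
\sum_{\gamma\in\pi\set}\mathcal{C}_\dm[R](\omega+\gamma)=1+\sum_{\nu\in\set'}f(\omega\cdot\nu).
\ee
If $R$ is interpolatory then $f\equiv 0$ and (\ref{eq:planA}) gives interpolatoriness of $\mathcal{C}_\dm[R]$. Conversely, if the left side of (\ref{eq:planA}) equals $1$ for every $\omega$, specialising $\omega=(\omega_1,0,\ldots,0)$ collapses the sum on the right to $2^{\dm-1}f(\omega_1)+(2^{\dm-1}-1)f(0)$; setting $\omega_1=0$ forces $f(0)=0$, and then $f(\omega_1)=0$ for all $\omega_1$, so $R$ is interpolatory.

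For (b) I assume without loss of generality that $R$ is interpolatory and expand
\[
\overline{\mathcal{C}_\dm[R](\omega)}\,\mathcal{C}_\dm[\tilde R](\omega)=\Bigl(-1+{1\over 2^{\dm-1}}\sum_\nu\overline{R(\omega\cdot\nu)}\Bigr)\Bigl(-1+{1\over 2^{\dm-1}}\sum_\mu\tilde R(\omega\cdot\mu)\Bigr),
\]
and sum over $\gamma\in\pi\set$. Five cases arise in the cross term according as $\nu,\mu\in\set$ are individually zero or nonzero. The delicate case is $\nu,\mu\in\set'$ with $\nu\ne\mu$: the two $\mathbb{F}_2$-linear forms $\gamma\mapsto\gamma\cdot\nu$ and $\gamma\mapsto\gamma\cdot\mu$ are linearly independent on $\set$, so each of the four joint parity pairs is attained exactly $2^{\dm-2}$ times and the partial $\gamma$-sum factorises as
\[
2^{\dm-2}\bigl[\overline{R(\omega\cdot\nu)}+\overline{R(\omega\cdot\nu+\pi)}\bigr]\bigl[\tilde R(\omega\cdot\mu)+\tilde R(\omega\cdot\mu+\pi)\bigr];
\]
the interpolatoriness of $R$ collapses the first bracket to $1$. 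Compiling all contributions and writing $g(\xi):=\overline{R(\xi)}\tilde R(\xi)+\overline{R(\xi+\pi)}\tilde R(\xi+\pi)$, I expect the identity
\be
\label{eq:planB}
\sum_{\gamma\in\pi\set}\overline{\mathcal{C}_\dm[R](\omega+\gamma)}\,\mathcal{C}_\dm[\tilde R](\omega+\gamma)=1+{1\over 2^{\dm-1}}\sum_{\nu\in\set'}\bigl[g(\omega\cdot\nu)-1\bigr].
\ee
Biorthogonality of the coset sum pair is the statement that the right side of (\ref{eq:planB}) is identically $1$, and by the axis-specialisation used in (a) this is equivalent to $g\equiv 1$, i.e., to univariate biorthogonality of $R$ and $\tilde R$.

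For (c) I exploit the following simplification, valid under interpolatoriness of $R$: for every $\nu_0\in\set'$ and every $\eta\in\Rd$,
\be
\label{eq:planC}
\mathcal{C}_\dm[R](\pi\nu_0+\eta)={1\over 2^{\dm-1}}\sum_{\nu\in\set}(-1)^{\nu_0\cdot\nu}R(\eta\cdot\nu),
\ee
obtained by applying $R(\pi+\xi)=1-R(\xi)$ on cosets with $\nu_0\cdot\nu$ odd together with $\sum_{\nu\in\set}(-1)^{\nu_0\cdot\nu}=0$ for $\nu_0\ne 0$. Differentiating the univariate interpolatory relation gives $R^{(j)}(\pi)=-R^{(j)}(0)$ for $j\ge 1$, so accuracy $m$ of $R$ forces $R^{(j)}(0)=0$ for $1\le j\le m-1$ and $R^{(m)}(0)\ne 0$. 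Taylor-expanding each $R(\eta\cdot\nu)$ in (\ref{eq:planC}) about the origin and exchanging sums, the character factor $\sum_{\nu\in\set}(-1)^{\nu_0\cdot\nu}(\eta\cdot\nu)^j$ kills every contribution of order $j<m$, so $\mathcal{C}_\dm[R]$ has a zero of order at least $m$ at every $\pi\nu_0\in\pi\set'$. The bound is sharp: taking $\nu_0=e_i$ and restricting to $\eta=te_i$ separates the right side of (\ref{eq:planC}) by $\nu_i$ and yields $\mathcal{C}_\dm[R](\pi e_i+te_i)=1-R(t)$, which vanishes to order exactly $m$ at $t=0$. Hence the accuracy of $\mathcal{C}_\dm[R]$ equals $m$. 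The main obstacle throughout is the bookkeeping in part (b); the $\mathbb{F}_2$-independence of $\gamma\cdot\nu$ and $\gamma\cdot\mu$ for distinct nonzero $\nu,\mu$ is the decisive combinatorial input that makes the off-diagonal cross terms factor and then collapse once interpolatoriness is invoked.
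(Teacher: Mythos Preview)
Your proof is correct and complete; the route differs from the paper's in the main computational device used in each part.

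For (a), the paper works on the filter side: it reads off the values of the $n$-dimensional filter $h$ from those of $H$ and checks the interpolatory condition (\ref{eq:interpfilter}) directly (only the forward implication is spelled out). Your mask-side identity (\ref{eq:planA}) handles both directions at once and makes the converse transparent via axis specialisation.

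For (b), the paper decomposes $R$ and $\tilde R$ into their odd and even parts and uses the character identity (\ref{eq:charrunpiset}) to collapse the $\gamma$-sum; interpolatoriness then enters simply as the statement that the even part of the interpolatory mask is the constant $1/2$. You instead classify pairs $(\nu,\mu)$ by whether the $\mathbb{F}_2$-linear forms $\gamma\mapsto\gamma\cdot\nu$ and $\gamma\mapsto\gamma\cdot\mu$ are independent, and factor the partial $\gamma$-sums case by case. The odd/even device is shorter bookkeeping; your approach is more explicit about the underlying combinatorics and yields the clean closed form (\ref{eq:planB}), from which the equivalence with $g\equiv 1$ is immediate by the same axis trick as in (a).

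For (c), both arguments use the consequence $R^{(j)}(\pi)=-R^{(j)}(0)$ of interpolatoriness to transfer vanishing at $\pi$ to vanishing at $0$, and both obtain sharpness by restricting to a coordinate axis. The paper differentiates $\mathcal{C}_n[R]$ term-by-term via the chain rule and invokes the counting identity (\ref{eq:congruent}) to handle the zeroth-order vanishing; your single formula (\ref{eq:planC}) packages all of this at once and delivers the Taylor expansion at every $\pi\nu_0$ simultaneously, which is a tidy repackaging of the same content.
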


\begin{proof}
See Appendix~\ref{subS:proofofThm1}.
\end{proof}

\medskip
Below we add a few remarks on Theorem~\ref{thm:main}.

\begin{figure}[t]
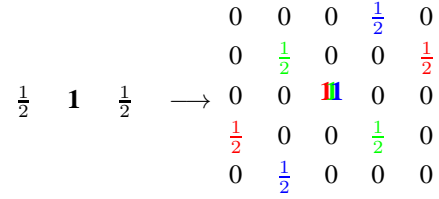

\centering
$1\over2$ \quad {\bf1} \quad $1\over2$ \quad
$\longrightarrow$
\put(41,3) {\bf\textcolor{red}1}
\put(43,3) {\bf\textcolor{green}1}
\put(45,3) {\bf\textcolor{blue}1}
\begin{tabular}{ccccc}
0&0&0&$\textcolor{blue}{1\over2}$&0\\[3pt]
0&$\textcolor{green} {1\over2}$&0&0&$\textcolor{red}{1\over2}$\\[3pt]
0&0& &0&0\\[3pt]
$\textcolor{red}{1\over2}$&0&0&$\textcolor{green} {1\over2}$&0\\[3pt]
0&$\textcolor{blue} {1\over2}$&0&0&0\\[3pt]
\end{tabular}
\caption{2-D coset sum refinement filter supported on different directed line segments (cf.~Example 3)}
\label{figure:differentspline}
\end{figure}

\medskip\noindent
{\bf Remark on Theorem~\ref{thm:main}(b).} The interpolatory condition in part (b) cannot be omitted. To see this, we consider the univariate refinement mask associated with Daubechies wavelet system of order $2$ \cite{Da}, and let
$$R(\ome)=\tilde{R}(\ome)
 =\cos^2({\ome\over 2})\left({{1+\sqrt{3}}\over 2}
 +{{1-\sqrt{3}}\over 2}e^{-i\ome}\right),\ome\in\TT.$$
Then $R$ (hence $\tilde{R}$) is not interpolatory, and 
$R$ and $\tilde{R}$ are biorthogonal. However it is easy to see that
 $\mathcal{C}_2[R]$
and
$\mathcal{C}_2[\tilde{R}]$ are not biorthogonal.
\endproof

\medskip\noindent
{\bf Remark on Theorem~\ref{thm:main}(c).} For general (not necessarily interpolatory) $R$,
the accuracy number of $\mathcal{C}_\dm[R]$ is at least $\min\{m_1,m_2\}$ where $m_1$ is the accuracy number of $R$ and $m_2$ is the order that $1-R$ has a zero at the origin. This statement can be proved using similar arguments as in the proof of Theorem~\ref{thm:main}(c), and we omit the proof.
\endproof

\medskip

The Deslauriers-Dubuc mask \cite{DD} of order $2k$
($k\in\NN$) is defined as
\bea
U_{2k}(\ome)&:=&\cos^{2k}({\ome\over 2})P_k(\sin^2({\ome\over 2})),\label{eq:defU2k}\\
P_k(x)&:=&\sum_{j=0}^{k-1}{\disp(k-1+j)!\over\disp j!(k-1)!}x^j.\nonumber
\eea
The mask $U_{2k}$ is interpolatory and has accuracy number $2k$.
We now present a family of biorthogonal coset sum refinement masks based on the Deslauriers-Dubuc interpolatory masks.

\begin{figure}[t] 
\centering
\setlength{\unitlength}{0.8mm}
\begin{picture}(75,72)
\put(3,72){
\small\begin{tabular*}{0.25\textwidth}{@{\extracolsep{\fill}}ccccccc}
{\color{blue}$-{1\over16}$}&{\color{blue}0}&{\color{blue}${9\over16}$}&{\color{blue}\bf1}&{\color{blue}${9\over16}$}&{\color{blue}0}&{\color{blue}$-{1\over16}$}\\[4pt]
\end{tabular*}
}
\put(14,66){Filter associated with $U_4$}
\put(38,59){$\mathcal{C}_2$ (Coset sum)}
\put(35,64){\vector(0,-1){10}}
\put(1,27){
\small\begin{tabular*}{0.25\textwidth}{@{\extracolsep{\fill}}ccccccc}
0&0&0&{\color{blue}$-{1\over16}$}&0&0&{\color{blue}$-{1\over16}$}\\[4pt]
0&0&0&{\color{blue}0}&0&{\color{blue}0}&0\\[4pt]
0&0&0&{\color{blue}${9\over16}$}&{\color{blue}${9\over16}$}&0&0\\[4pt]
{\color{blue}$-{1\over16}$}&{\color{blue}0}&{\color{blue}${9\over16}$}&{\color{blue}\bf1}&{\color{blue}${9\over16}$}&{\color{blue}0}&{\color{blue}$-{1\over16}$}\\[4pt]0&0&{\color{blue}${9\over16}$}&{\color{blue}${9\over16}$}&0&0&0\\[4pt]
0&{\color{blue}0}&0&{\color{blue}0}&0&0&0\\[4pt]
{\color{blue}$-{1\over16}$}&0&0&{\color{blue}$-{1\over16}$}&0&0&0\\[4pt]
\end{tabular*}
}
\put(11,0){Filter associated with $\mathcal{C}_2[U_{4}]$}
\end{picture}
\caption{Refinement filters associated with the masks $U_4$ and $\mathcal{C}_2[U_{4}]$ in Example 4}
\label{figure:C2U4}
\end{figure}

\medskip\noindent
{\bf Example 4: A family of $\dm$-D biorthogonal coset sum refinement masks.}
For each $k\in\NN$, we choose $U_{2k}$ in (\ref{eq:defU2k}) as a univariate interpolatory refinement mask.
By Theorem~\ref{thm:main}(a)(c), $\mathcal{C}_\dm[U_{2k}]$ is an $\dm$-D interpolatory refinement mask with accuracy number $2k$. It is straightforward to see that for each $k\in\NN$, 
\be
\label{eq:defS2k}
S_{2k}:=U_{2k}(3-2U_{2k})
\ee
is biorthogonal\footnote{Given a refinement filter, a dual refinement filter is not uniquely determined in general. The specific choice of the dual filter of $U_{2k}$ as in (\ref{eq:defS2k}) can be obtained, for example, from Proposition 2.1 in \cite{JS}. See also Theorem 2 in \cite{Hur} for an alternative derivation based on a critical representation of the Laplacian pyramid (\cite{BA}).} to $U_{2k}$. By Theorem~\ref{thm:main}(b),
$\mathcal{C}_\dm[U_{2k}]$ is biorthogonal to
$\mathcal{C}_\dm[S_{2k}]$. Since $S_{2k}$ has at least $2k$
accuracy and $1-S_{2k}$ has a zero of order at least $2k$ at the origin, by the Remark on Theorem~\ref{thm:main}(c),
$\mathcal{C}_\dm[S_{2k}]$ has at least
$2k$ accuracy. The filters for the case $k=\dm=2$ are depicted in
Figure~\ref{figure:C2U4} and \ref{figure:C2S4}. Using the standard tool in wavelet literature (see, for example, \cite{LLS} and references therein), one can show that both $\mathcal{C}_2[U_{4}]$ and $\mathcal{C}_2[S_{4}]$ generate the refinable functions that are in $L^2(\RR^2)$ (cf. Figure~\ref{figure:s4u4refinement}).
\endproof

\medskip
Similar to the tensor product case, the coset sum can actually take different univariate refinement masks. However, since the cardinality of the set $\set'$ is $2^\dm-1$, we have $2^\dm-1$ different directions to consider, instead of $\dm$ different coordinate directions for the tensor product case. In such a case the $\dm$-D coset sum refinement can be written as 
\be
\label{eq:cosetsumrefinementmask}
\mathcal{C}_n[(R_\nu)_{\nu\in\set'}](\ome):={1\over
2^{\dm-1}}\left(1-2^{\dm-1}
+\sum_{\nu\in\set'}R_\nu(\ome\cdot\nu)\right),
\ee
where $R_\nu$, $\nu\in\set'$, are possibly distinct univariate refinement masks for different direction $\nu$.

Let $n=n_1+n_2+\cdots+n_m,  n_j\ge1$ for $j=1,2,\cdots,m$. Then the tensor product refinement mask in (\ref{eq:tensorproductrefinementmask}) can be written as the product of possibly non-univariate lower dimensional tensor product refinement masks as follows: for $\ome=(\ome_1,\ome_2,\cdots,\ome_\dm)\in\Td$,
\beaN
&&\mathcal{T}_{\dm}[R_1,\cdots,R_n](\ome)\\
&&=\mathcal{T}_{n_1}[R_1,\cdots,R_{n_1}](\ome_1,\cdots,\ome_{n_1})\cdot\\
&&\quad\:\mathcal{T}_{n_2}[R_{n_1+1},\cdots,R_{n_1+n_2}](\ome_{n_1+1},\cdots,\ome_{n_1+n_2})\cdot\\
&&\cdots
\mathcal{T}_{n_m}[R_{n_1+\cdots+n_{m-1}+1},\cdots,R_n](\ome_{n_1+\cdots+n_{m-1}+1},\cdots,\ome_n).
\eeaN
On the contrary, the coset sum refinement mask {\it cannot} be written as the sum of non-univariate lower dimensional coset sum refinement masks.

\begin{figure}[t]
\begin{picture}(75,220)
\put(0,220){
\scalebox{0.7}
{
\centering
\small\begin{tabular}{ccccccccccccc}
{\color{blue}$-\frac{2}{512}$}&{\color{blue}0}&{\color{blue}$\frac{36}{512}$}&{\color{blue}$-\frac{32}{512}$}&{\color{blue}$-\frac{126}{512}$}&{\color{blue}$\frac{288}{512}$}& {\color{blue}$\bf\frac{696}{512}$}&{\color{blue}$\frac{288}{512}$}&{\color{blue}$-\frac{126}{512}$}&{\color{blue}$-\frac{32}{512}$}&{\color{blue}$\frac{36}{512}$}&{\color{blue}0}&{\color{blue}$-\frac{2}{512}$}\\[7pt]
\end{tabular}
}
}
\put(84,205){Filter associated with $S_4$}
\put(137,188){$\mathcal{C}_2$ (Coset sum)}
\put(132,200){\vector(0,-1){23}}
\put(1,90){
\scalebox{0.7}
{
\centering
\small\begin{tabular}{ccccccccccccc}
0&0&0&0&0&0&{\color{blue}$-\frac{2}{512}$}&0&0&0&0&0&{\color{blue}$-\frac{2}{512}$}\\[7pt]
0&0&0&0&0&0&{\color{blue}0}&0&0&0&0&{\color{blue}0}&0\\[7pt]
0&0&0&0&0&0&{\color{blue}$\frac{36}{512}$}&0&0&0&{\color{blue}$\frac{36}{512}$}&0&0\\[7pt]
0&0&0&0&0&0&{\color{blue}$-\frac{32}{512}$}&0&0&{\color{blue}$-\frac{32}{512}$}&0&0&0\\[7pt]
0&0&0&0&0&0&{\color{blue}$-\frac{126}{512}$}&0&{\color{blue}$-\frac{126}{512}$}&0&0&0&0\\[7pt]
0&0&0&0&0&0&{\color{blue}$\frac{288}{512}$}&{\color{blue}$\frac{288}{512}$}&0&0&0&0&0\\[7pt]
{\color{blue}$-\frac{2}{512}$}&{\color{blue}0}&{\color{blue}$\frac{36}{512}$}&{\color{blue}$-\frac{32}{512}$}&{\color{blue}$-\frac{126}{512}$}&{\color{blue}$\frac{288}{512}$}& {\color{blue}$\bf\frac{1064}{512}$}&{\color{blue}$\frac{288}{512}$}&{\color{blue}$-\frac{126}{512}$}&{\color{blue}$-\frac{32}{512}$}&{\color{blue}$\frac{36}{512}$}&{\color{blue}0}&{\color{blue}$-\frac{2}{512}$}\\[7pt]
0&0&0&0&0&{\color{blue}$\frac{288}{512}$}&{\color{blue}$\frac{288}{512}$}&0&0&0&0&0&0\\[7pt]
0&0&0&0&{\color{blue}$-\frac{126}{512}$}&0&{\color{blue}$-\frac{126}{512}$}&0&0&0&0&0&0\\[7pt]
0&0&0&{\color{blue}$-\frac{32}{512}$}&0&0&{\color{blue}$-\frac{32}{512}$}&0&0&0&0&0&0\\[7pt]
0&0&{\color{blue}$\frac{36}{512}$}&0&0&0&{\color{blue}$\frac{36}{512}$}&0&0&0&0&0&0\\[7pt]
0&{\color{blue}0}&0&0&0&0&{\color{blue}0}&0&0&0&0&0&0\\[7pt]
{\color{blue}$-\frac{2}{512}$}&0&0&0&0&0&{\color{blue}$-\frac{2}{512}$}&0&0&0&0&0&0\\[7pt]
\end{tabular}
}
}
\put(78,0){Filter associated with $\mathcal{C}_2[S_{4}]$}
\end{picture}
\caption{Refinement filters associated with the masks $S_4$ and $\mathcal{C}_2[S_{4}]$ in Example 4}
\label{figure:C2S4}
\end{figure}

\begin{figure}[t]
\includegraphics[scale=0.23]{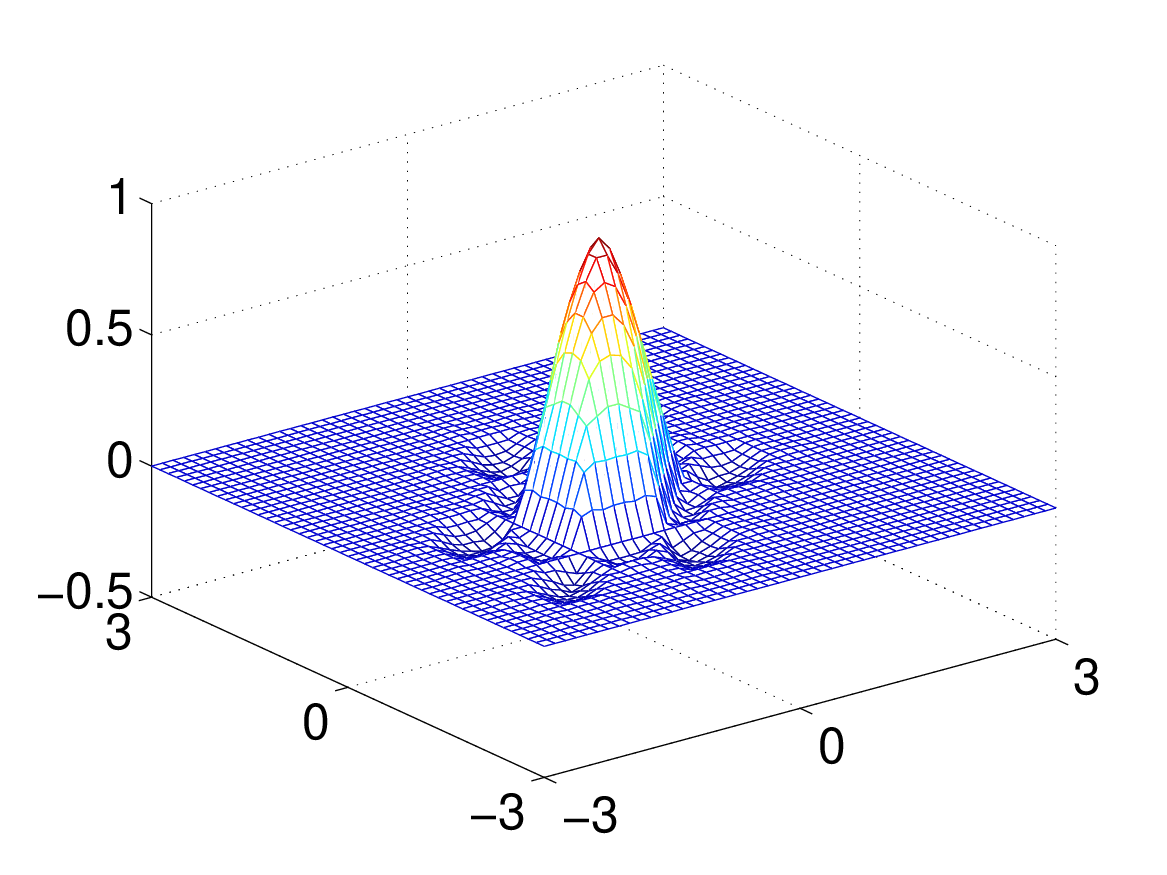}
\hspace{-0.5cm}
\includegraphics[scale=0.23]{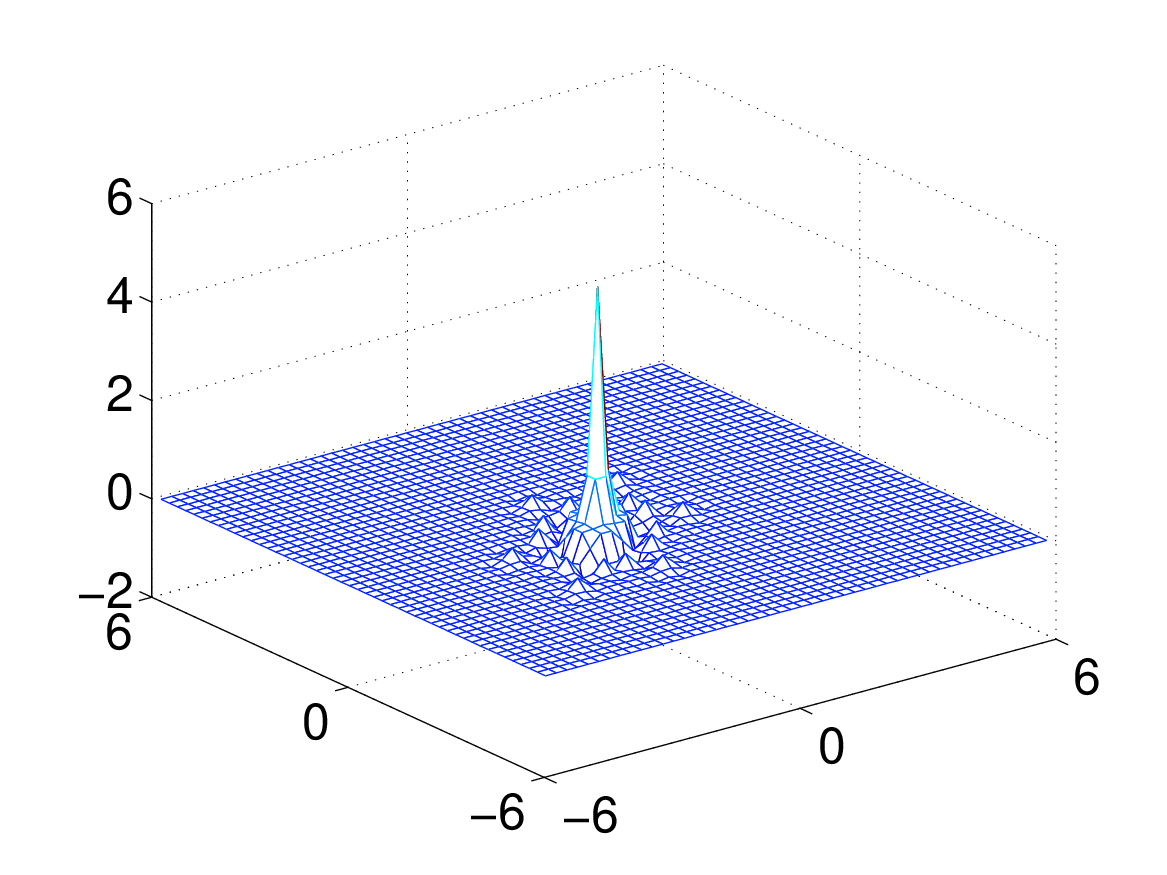}
\caption{The refinable functions associated with the coset sum refinement masks $\mathcal{C}_2[U_4]$ (left) and $\mathcal{C}_2[S_4]$ (right) in Example 4}
\label{figure:s4u4refinement}
\end{figure}

We can also consider a hybrid of the coset sum and the tensor product : for $n=n_1+n_2+\cdots+n_m,  n_j\ge1, j=1,2,\cdots,m$,
\bea
&&\mathcal{C}_{n_1}[R](\ome_1,\cdots,\ome_{n_1})\cdot\mathcal{C}_{n_2}[R](\ome_{n_1+1},\cdots,\ome_{n_1+n_2})\cdot\nonumber\\
&&\cdots\mathcal{C}_{n_m}[R](\ome_{n_1+\cdots+n_{m-1}+1},\ldots,\ome_n).\label{eq:hybridrefinementmask}
\eea
Similar statements to the ones of Theorem~\ref{thm:main} can be made for the coset sum refinement mask in a generalized sense as in (\ref{eq:cosetsumrefinementmask}) and for the hybrid refinement mask as in (\ref{eq:hybridrefinementmask}). We omit the statements and the proofs as they are similar to the ones of Theorem~\ref{thm:main}.

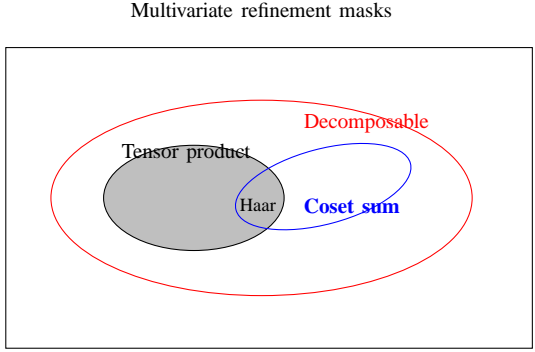
\begin{figure}[t]
\centering
\begin{tikzpicture}
\draw (-2.5,2) rectangle (4.5,-2);
\node at (0.9,2.5) {\footnotesize Multivariate refinement masks};
\draw[fill=lightgray](0,0) ellipse(1.2 and 0.7);
\draw [color=blue][rotate=15](1.7,-0.3) ellipse (1.2 and 0.5);
\draw [color=red](0.9,0) ellipse (2.8 and 1.3);
\node at (-0.1,0.6) {\footnotesize Tensor product};
\node at (2.1,-0.1) [color=blue]{\bf\footnotesize Coset sum};
\node at (2.3,1) [color=red]{\footnotesize Decomposable};
\node at (0.85,-0.1) {\scriptsize Haar};
\end{tikzpicture}
\caption{\footnotesize  The tensor product multivariate refinement masks are not the only decomposable refinement masks. The coset sum provides a systematic way to construct other types of decomposable refinement masks. The other decomposable refinement masks include the ones constructed by the existing approaches (cf. discussion in \S\ref{S:intro}). The multivariate Haar refinement mask is essentially the only mask that can be obtained by using either the tensor product or the coset sum (cf.~Example 1).}
\label{figure:Venn diagram}
\end{figure}

The diagram in Figure~\ref{figure:Venn diagram} illustrates the relation among the tensor product, the coset sum, and the decomposable multi-D refinement masks. We note that the type of decomposable refinement masks that can be obtained by coset sum is different from the one by the aforementioned existing methods \cite{TK,PKVA,BS,SK,AL,KS,Sha} since coset sum works for $2^\dm$-channel sampling lattices (cf. \S\ref{subS:Algorithms}). 

\section{Application: coset sum wavelet systems}
\label{S:waveletsystems}
In this section we introduce a special class of wavelet systems that can be derived from coset sum refinement masks in a very simple manner, and present their properties, including fast algorithms, together with some experimental results.

\subsection{Coset sum wavelet systems}
\label{subS:cosetsumwaveletsystems}
Since the coset sum provides a way to construct a pair of multivariate biorthogonal refinement masks from univariate ones, it can be combined with any procedure for finding wavelet masks to construct a multivariate biorthogonal wavelet system. It is well known (for example, see \cite{P} and Example~5 below) that for a given pair of $\dm$-D biorthogonal refinement masks, different biorthogonal wavelet systems can be obtained by choosing wavelet masks differently. The specific choice we make in this paper is guided by the simplicity of the form of the primal wavelet masks (cf.~(\ref{eq:nDwaveletmask}) and the discussion below). Use of other criteria may result in a totally different type of ``coset sum'' wavelet systems, hence discussing about properties of coset sum wavelet systems makes sense only after the wavelet masks are specifically chosen. Below we present our approach for determining the wavelet masks.

Suppose that $S$ and $U$ are 1-D biorthogonal refinement masks, and that $U$ is interpolatory. Theorem~\ref{thm:main}(b) implies that the $\dm$-D coset sum refinement masks $\mathcal{C}_\dm[S]$ and $\mathcal{C}_\dm[U]$ are biorthogonal. Moreover, from (\ref{eq:specialform}) and the assumption that $U$ is interpolatory, we see that the restriction of the $\dm$-D mask $\mathcal{C}_\dm[U]$ to $\nu$ direction, $\nu\in\set'=\{0,1\}^\dm\bks0$, is given by $U(\ome\cdot\nu)$ for $\ome\in\Td$ (up to constants), which is essentially a 1-D mask. Hence, as in the 1-D wavelet construction (cf.~(\ref{eq:1Dwaveletmasks})), one can attempt to define the multivariate wavelet masks $t_\nu$, $\nu\in\set'$, (note that we have $2^\dm-1$ wavelet masks) of the form
\be
\label{eq:nDwaveletmask}
t_\nu(\ome)=e^{-i\ome\cdot\nu}\overline{U(\ome\cdot\nu+\pi)}, \quad \ome\in\Td.
\ee
The next theorem shows that the above approach leads to the construction of $\dm$-D biorthogonal wavelet systems. 
\begin{theorem}
\label{thm:cosetsumwavelet}
Suppose that $S$ and $U$ are 1-D biorthogonal refinement masks, and that $U$ is interpolatory. Define $\dm$-D biorthogonal refinement masks as
$$
\tau:=\mathcal{C}_\dm[S],\quad
\tau^\du:=\mathcal{C}_\dm[U],
$$
and $\dm$-D primal wavelet masks $t_\nu$, $\nu\in\set'$, as in (\ref{eq:nDwaveletmask}).
Then there exist dual wavelet masks $t_\nu^\du$, $\nu\in\set'$, such that $(\tau,(t_\nu)_{\nu\in\set'})$ and $(\tau^\du,(t_\nu^\du)_{\nu\in\set'})$ are $\dm$-D combined biorthogonal masks.
\endproof
\end{theorem}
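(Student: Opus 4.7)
The plan is to form the modulation matrix of $(\tau,(t_\nu)_{\nu\in\set'})$, show it is pointwise invertible with a Laurent polynomial inverse, and read the dual wavelet masks off its conjugate inverse.  Let $M(\omega)$ be the $2^\dm\times 2^\dm$ matrix with entries $M_{g,\mu}(\omega):=\tau_\mu(\omega+\pi g)$ for $g,\mu\in\set$, where $\tau_0:=\tau$ and $\tau_\nu:=t_\nu$ for $\nu\in\set'$.  The shift-equivariance of $M$ in $\omega$ makes the $2^\dm$ MUEP conditions (\ref{eq:mUEP}) equivalent to the single matrix identity $\overline{M(\omega)}\,M^\du(\omega)^T=I$ for all $\omega\in\Td$, where $M^\du$ is defined analogously from any dual masks; equivalently, $M^\du=(M^*)^{-1}$ with $M^*:=\overline{M}^T$.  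I would therefore \emph{define} $M^\du(\omega):=(M^*(\omega))^{-1}$, so that the MUEP holds automatically provided the inverse exists and the resulting $M^\du_{0,0}$ coincides with the prescribed $\mathcal{C}_\dm[U]$.

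The first and main step is pointwise invertibility of $M$ with a polynomial inverse.  Using the formula $t_\nu(\omega+\pi g)=(-1)^{g\cdot\nu}e^{-i\omega\cdot\nu}\overline{U(\omega\cdot\nu+\pi(1+g\cdot\nu))}$ together with the interpolatory identity $U(\xi)+U(\xi+\pi)=1$, each column $\nu\in\set'$ of $M$ is a linear combination of the $0$th and $\nu$th columns of the Hadamard matrix $H'_{g,\mu}:=(-1)^{g\cdot\mu}$; the first column decomposes similarly because $\mathcal{C}_\dm[S](\omega+\pi g)$ is an affine function of $\{(-1)^{g\cdot\nu}\}_{\nu\in\set'}$.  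This yields a factorization $M(\omega)=H'\,K(\omega)$ in which $K(\omega)$ is an arrow-type matrix (nonzero only on its first row, first column, and diagonal).  Block inversion gives $\det K=\det D\cdot\kappa$, where $D$ is the diagonal block (with the nonvanishing monomial determinant $2^{-(2^\dm-1)}\prod_{\nu\in\set'}e^{-i\omega\cdot\nu}$) and $\kappa$ is the scalar Schur complement.  The crucial algebraic step is $\kappa\equiv 1$: after unfolding $\kappa$ in terms of the $1$-D quantities at $\xi=\omega\cdot\nu$, applying $\overline{U(\xi+\pi)}=1-\overline{U(\xi)}$ and then the $1$-D biorthogonality $\overline{S(\xi)}U(\xi)+\overline{S(\xi+\pi)}U(\xi+\pi)=1$ collapses the sum.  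Hence $\det M$ is a nonvanishing monomial and $(M^*)^{-1}$ has Laurent polynomial entries.

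It remains to verify the consistency $M^\du_{0,0}(\omega)=\mathcal{C}_\dm[U](\omega)$; by the shift-equivariance of $M^\du$ inherited from $M$, this is equivalent to the linear system
\[
\sum_{g\in\set}\overline{\tau_\mu(\omega+\pi g)}\,\mathcal{C}_\dm[U](\omega+\pi g)=\delta_{\mu,0},\qquad\mu\in\set.
\]
The case $\mu=0$ is exactly the biorthogonality of $\mathcal{C}_\dm[S]$ and $\mathcal{C}_\dm[U]$ from Theorem~\ref{thm:main}(b).  For $\mu=\nu\in\set'$, one expands both $\overline{t_\nu(\omega+\pi g)}$ and $\mathcal{C}_\dm[U](\omega+\pi g)$ into combinations of $(-1)^{g\cdot\nu'}$-terms and swaps the order of summation; the Hadamard orthogonality $\sum_{g\in\set}(-1)^{g\cdot(\nu+\nu')}=2^\dm\delta_{\nu,\nu'}$, combined with $\nu\ne 0$ (so $\sum_g(-1)^{g\cdot\nu}=0$), forces all surviving terms to cancel.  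Reading the first row of $M^\du$ then yields the desired dual wavelet masks $t_\nu^\du(\omega):=M^\du_{0,\nu}(\omega)$ as Laurent polynomials.

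The main obstacle is the Schur-complement identity $\kappa\equiv 1$, which requires a careful combination of the interpolatory condition on $U$ with the $1$-D biorthogonality of $S$ and $U$; once that is in hand, invertibility of $M$ and the consistency check above both follow, and the construction delivers $(t_\nu^\du)_{\nu\in\set'}$ satisfying all the MUEP conditions.
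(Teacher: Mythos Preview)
Your approach is correct and takes a genuinely different route from the paper's. The paper never inverts the modulation matrix directly; instead it imports the CAP/CAMP machinery from an earlier reference. It first augments the family with an extra mask $t_0(\omega):=\tfrac12(1-\tau(\omega))$ and observes that the corresponding CAP masks $(\tau_\nu)_{\nu\in\Gamma}$ satisfy MUEP against the trivial Laplacian-pyramid duals $\tau_\nu^\du(\omega)=2^{1-n}e^{-i\nu\cdot\omega}$. It then uses the relation $\tau_\nu-t_\nu=f_\nu t_0$ (with $f_\nu$ $\pi$-periodic) together with two further identities, one from the interpolatory property of $\tau^\du$ and one from biorthogonality, to eliminate $t_0$ algebraically from the MUEP equations, arriving at the explicit closed form $t_\nu^\du=-2^{1-n}g_\nu\tau^\du+\tau_\nu^\du$. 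Your arrowhead factorization $M=H'K$ and the Schur-complement identity $\kappa\equiv1$ package the same two analytic ingredients (interpolatory $U$, one-dimensional biorthogonality of $S$ and $U$, in the combination $S(\xi)\overline{U(\xi)}+S(\xi+\pi)\overline{U(\xi+\pi)}=1$) into a single determinant computation, and your consistency check that column $0$ of $(M^*)^{-1}$ reproduces $\mathcal{C}_n[U]$ plays the role of the paper's elimination of $t_0$. What the paper's argument buys is an explicit formula for the dual masks that feeds directly into the fast reconstruction algorithm; what yours buys is self-containment (no appeal to CAP/CAMP or the Laplacian pyramid) and a clean structural reason for nondegeneracy, namely that $\det M$ is a nonvanishing monomial.
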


\medskip
\begin{proof}
See Appendix~\ref{subS:proofofThm2}.
\end{proof}

\medskip\noindent
{\bf Remark 1.}
We refer to the biorthogonal wavelet system constructed from the $\dm$-D combined biorthogonal masks in Theorem~\ref{thm:cosetsumwavelet} as the {\it canonical coset sum wavelet system}. As we discussed previously, there may be many other coset sum wavelet systems associated with the same coset sum refinement masks. Throughout this paper, the word ``canonical'' is suppressed when no confusion arises.
\endproof

\medskip\noindent
{\bf Remark 2.}
The exact form of the dual wavelet masks $t_\nu^\du$, $\nu\in\set'$, of the canonical coset sum wavelet system in Theorem~\ref{thm:cosetsumwavelet} is not important for understanding our results in this paper, but knowing it may be useful in some other contexts. By carefully inspecting the proof of Theorem~\ref{thm:cosetsumwavelet}, we see that the dual wavelet masks $t_\nu^\du$, $\nu\in\set'$, have the form
\be
\label{eq:tnudu}
t_\nu^\du(\ome)=2^{-\dm+1}e^{-i\ome\cdot\nu}(1-2\tau^\du(\ome)
\overline{S^{o}(\ome\cdot\nu)}),\;\ome\in\Td,
\ee
where $S^{o}:=(S-S(\cdot+\pi))/2$ is the odd part of $S$.
\endproof

\medskip\noindent
{\bf Remark 3.}
We recall that there is a nonseparable multi-D wavelet construction
method based on the traditional lifting scheme (\cite{Sw}) proposed by
J. Kova\v{c}evi\'{c} and W. Sweldens \cite{KoSw}. A key ingredient of
their construction is a class of $\dm$-D filters called Neville
filters, which are used to build the predict and update filters. Given
suitable $\dm$-D Neville filters, their method can construct the
associated $\dm$-D
biorthogonal wavelet systems. It turns out that if the Neville filters
are extracted from the $\dm$-D
biorthogonal coset sum refinement masks, the above canonical coset sum
wavelet systems can also be obtained by their method, and our fast
algorithms associated with these wavelet systems (cf.
\S\ref{subS:Algorithms}) can be viewed as realization of a special
case of their fast transform. However it should be noted that the
Neville filters extracted from the $\dm$-D biorthogonal coset sum
refinement masks cannot be obtained from \cite{KoSw}. Also note that
other coset sum wavelet systems besides the canonical ones cannot be
constructed by their method regardless of the choice of the Neville filters.
\endproof

\medskip
Canonical coset sum wavelet systems have many potentially useful properties. The most distinctive property is that they can be associated with fast algorithms, which is explained in detail in the next subsection. Another (related) property is that they can be much more local than tensor product wavelet systems. The easiest way to see this property is probably through the following example. 

\medskip\noindent
{\bf Example 5: $\dm$-D coset sum Haar wavelet systems.} The simplest choice for the univariate refinement mask is the $1$-D Haar refinement mask $R(\ome)={1\over2}+{1\over2}e^{-i\ome}$, $\ome\in\TT$, which is biorthogonal to itself and interpolatory. Let $\tau=\tau^\du=\mathcal{C}_\dm[R]$ be the $\dm$-D Haar refinement mask, which can be obtained either by coset sum or tensor product (cf.~Example~1). Then from Theorem~\ref{thm:cosetsumwavelet} and the remarks after it, we obtain the canonical coset sum wavelet system whose $\dm$-D biorthogonal refinement masks are $\tau$ and $\tau^\du$, 
and whose primal and dual wavelet masks are, for $\nu\in\set'$ and $\ome\in\Td$,
$$
t_{\nu}(\ome)={e^{-i\ome\cdot\nu}-1\over 2},\quad
t_{\nu}^\du(\ome)={e^{-i\ome\cdot\nu}-\tau^\du(\ome)\over 2^{\dm-1}}.
$$
We refer to this wavelet system as the $\dm$-D (canonical) coset sum Haar wavelet system.
Each of the wavelet masks of this wavelet system has one vanishing moment. We note that this wavelet system is the same as the piecewise-constant biorthogonal wavelet system introduced in \cite{HR2} (up to constants), which is shown to be far more local than the tensor product Haar wavelet system, in high spatial dimensions.
\endproof

\begin{figure}[t]
\centering
\subfigure[Wavelet filter associated with the mask $t_{1,0}(\ome_1,\ome_2)$]{
\small
\centering
\begin{tabular*}{0.35\textwidth}{@{\extracolsep{\fill}}ccccccccc}
0&0&0&0&0&0&0&0&0\\[3pt]
0&0&0&0&0&0&0&0&0\\[3pt]
0&0&0&0&0&0&0&0&0\\[3pt]
0&0&0&0&0&0&0&0&0\\[3pt]
{\color{blue}0}&{\color{blue}0}&{\color{blue}$1\over16$}&{\color{blue}0}&{\color{blue}$\bf-{9\over16}$}&{\color{blue}1}&{\color{blue}$-{9\over16}$}&{\color{blue}0}&{\color{blue}$1\over16$}\\[3pt]
0&0&0&0&0&0&0&0&0\\[3pt]
0&0&0&0&0&0&0&0&0\\[3pt]
0&0&0&0&0&0&0&0&0\\[3pt]
0&0&0&0&0&0&0&0&0\\[3pt]
\end{tabular*}
}
\medskip
\subfigure[Wavelet filter associated with the mask $t_{0,1}(\ome_1,\ome_2)$]{
\small
\centering
\begin{tabular*}{0.35\textwidth}{@{\extracolsep{\fill}}ccccccccc}
0&0&0&0&{\color{blue}$1\over16$}&0&0&0&0\\[3pt]
0&0&0&0&{\color{blue}0}&0&0&0&0\\[3pt]
0&0&0&0&{\color{blue}$-{9\over16}$}&0&0&0&0\\[3pt]
0&0&0&0&{\color{blue}1}&0&0&0&0\\[3pt]
0&0&0&0&{\color{blue}$\bf-{9\over16}$}&0&0&0&0\\[3pt]
0&0&0&0&{\color{blue}0}&0&0&0&0\\[3pt]
0&0&0&0&{\color{blue}$1\over16$}&0&0&0&0\\[3pt]
0&0&0&0&{\color{blue}0}&0&0&0&0\\[3pt]
0&0&0&0&{\color{blue}0}&0&0&0&0\\[3pt]
\end{tabular*}
}
\medskip
\subfigure[Wavelet filter associated with the mask $t_{1,1}(\ome_1,\ome_2)$]{
\small
\centering
\begin{tabular*}{0.35\textwidth}{@{\extracolsep{\fill}}ccccccccc}
0&0&0&0&0&0&0&0&{\color{blue}${1\over16}$}\\[3pt]
0&0&0&0&0&0&0&{\color{blue}0}&0\\[3pt]
0&0&0&0&0&0&{\color{blue}$-{9\over16}$}&0&0\\[3pt]
0&0&0&0&0&{\color{blue}1}&0&0&0\\[3pt]
0&0&0&0&{\color{blue}$\bf-{9\over16}$}&0&0&0&0\\[3pt]
0&0&0&{\color{blue}0}&0&0&0&0&0\\[3pt]
0&0&{\color{blue}${1\over16}$}&0&0&0&0&0&0\\[3pt]
0&{\color{blue}0}&0&0&0&0&0&0&0\\[3pt]
{\color{blue}0}&0&0&0&0&0&0&0&0\\[3pt]
\end{tabular*}
}
\caption{Primal coset sum wavelet filters of Example 6 for 2-D with 4 vanishing moments ($\dm=k=2$)}
\label{figure:Waveletfilters}
\end{figure}

\medskip\noindent
{\bf Remark.}
We recall that orthogonality is a special case of biorthogonality. We note that the $\dm$-D Haar refinement mask (cf. Example 1 and 5) is orthogonal, whereas the $\dm$-D canonical coset sum Haar wavelet system (cf. Example 5) is not orthogonal. In fact, it is not possible to construct $\dm$-D canonical coset sum wavelet system that is orthogonal. This can be seen from the facts that the 1-D refinement mask we start with for such a wavelet system has to be interpolatory and orthogonal, and that there is no 1-D interpolatory orthogonal refinement mask (in the dyadic dilation) other than the Haar one (see, for example, \cite{JS}), whose associated $\dm$-D canonical coset sum wavelet system is not orthogonal as we just established. 
\endproof

\medskip
A drawback of the $\dm$-D coset sum Haar wavelet system in the previous example is that the wavelet masks have only one vanishing moment.
In order to construct $\dm$-D biorthogonal wavelet systems with larger number of vanishing moments, one needs to have $\dm$-D biorthogonal refinement masks with larger number of accuracy
(cf.~\S\ref{subS:masks}). In general, constructing $\dm$-D biorthogonal refinement masks with large number of accuracy can be cumbersome, especially when $\dm$ is large, since it involves solving a large number of linear equations. Since coset sum can preserve the biorthogonality and the accuracy number simultaneously, it allows one to bypass solving these linear systems to get biorthogonal refinement masks with large number of accuracy. Thus it is often easier to construct $\dm$-D wavelet systems based on the coset sum than other $\dm$-D wavelet systems, for large number of vanishing moments. The next is an example of such coset sum wavelet systems.

\medskip\noindent
{\bf Example 6: A family of $\dm$-D coset sum wavelet systems with larger number of vanishing moments.} We choose the univariate refinement masks $U_{2k}$ (interpolatory) and $S_{2k}$ as in (\ref{eq:defU2k}) and (\ref{eq:defS2k}), respectively, and apply Theorem~\ref{thm:cosetsumwavelet}. Then with the primal wavelet
masks given as
$$
t_{\nu}(\ome)=e^{-i\ome\cdot\nu}\sin^{2k}({\ome\cdot\nu\over
2})P_k(\cos^2({\ome\cdot\nu\over 2})),\quad \nu\in\set',
$$
with $\set'=\{0,1\}^\dm\bks0$,
there exist dual wavelet masks $t_{\nu}^\du$, $\nu\in\set'$, such that $(\mathcal{C}_n[S_{2k}],(t_{\nu})_{\nu\in\set'})$ and
$(\mathcal{C}_n[U_{2k}],(t_{\nu}^\du)_{\nu\in\set'})$ are $\dm$-D combined biorthogonal masks.
It is easy to check that each of the wavelet masks of this wavelet system has $2k$ vanishing moments. All the primal wavelet filters are supported on the union of $2^\dm-1$ line segments along $\nu$ direction for $\nu\in\set'$. For example, if $\dm=2$, then $\set'=\{(1,0), (0,1), (1,1)\}$ and the primal wavelet masks for the case $k=2$ are given as
\beaN
t_{(1,0)}(\ome_1,\ome_2)&\,=\,&e^{-i\ome_1}\sin^4({\ome_1\over 2})\left(1+2\cos^2({\ome_1\over 2})\right),\\
t_{(0,1)}(\ome_1,\ome_2)&=&e^{-i\ome_2}\sin^4({\ome_2\over 2})\left(1+2\cos^2({\ome_2\over 2})\right),\\
t_{(1,1)}(\ome_1,\ome_2)&=&e^{-i(\ome_1+\ome_2)}\sin^4({\ome_1+\ome_2\over
2})\\
&\cdot&\left(1+2\cos^2({\ome_1+\ome_2\over 2})\right).
\eeaN
The associated wavelet filters are depicted in Figure~\ref{figure:Waveletfilters}. The magnitude of the primal masks $\mathcal{C}_2[S_4],t_{(1,0)},t_{(0,1)}$, and $t_{(1,1)}$ (i.e. the magnitude of the frequency responses of the filters associated with the primal masks) are depicted in Figure~\ref{figure:cosetsum_fr}. The magnitude of the corresponding tensor product primal masks are given in Figure~\ref{figure:tensor_fr} for comparison.

Since $\mathcal{C}_2[U_{4}]$ and $\mathcal{C}_2[S_{4}]$ generate the refinable functions that are in $L^2(\RR^2)$ (cf.~Example 4 and Figure~\ref{figure:s4u4refinement}), and since we have FIR filters, the $2$-D coset sum wavelet system generated from the combined biorthogonal masks $(\mathcal{C}_2[S_4],t_{(1,0)},t_{(0,1)},t_{(1,1)})$ and $(\mathcal{C}_2[U_4],t^\du_{(1,0)},t^\du_{(0,1)},t^\du_{(1,1)})$ is also in $L^2(\RR^2)$. \endproof

\begin{figure}[t]
\hspace{-0.7cm}
\includegraphics[scale=0.5]{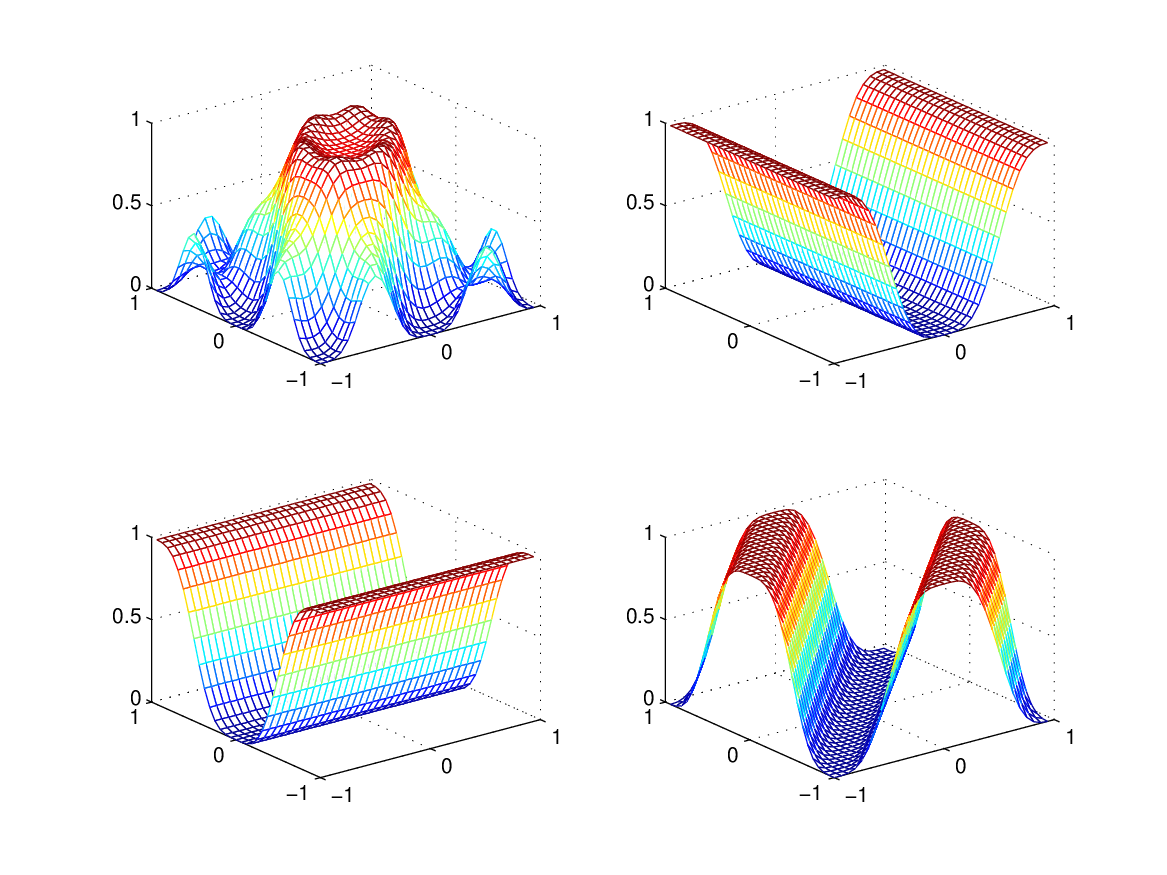}
\caption{The magnitude of the primal coset sum masks $\mathcal{C}_2[S_4],t_{(1,0)},t_{(0,1)}$, and $t_{(1,1)}$ in Example 6}
\label{figure:cosetsum_fr}
\end{figure}

\begin{figure}[t]
\hspace{-0.7cm}
\includegraphics[scale=0.5]{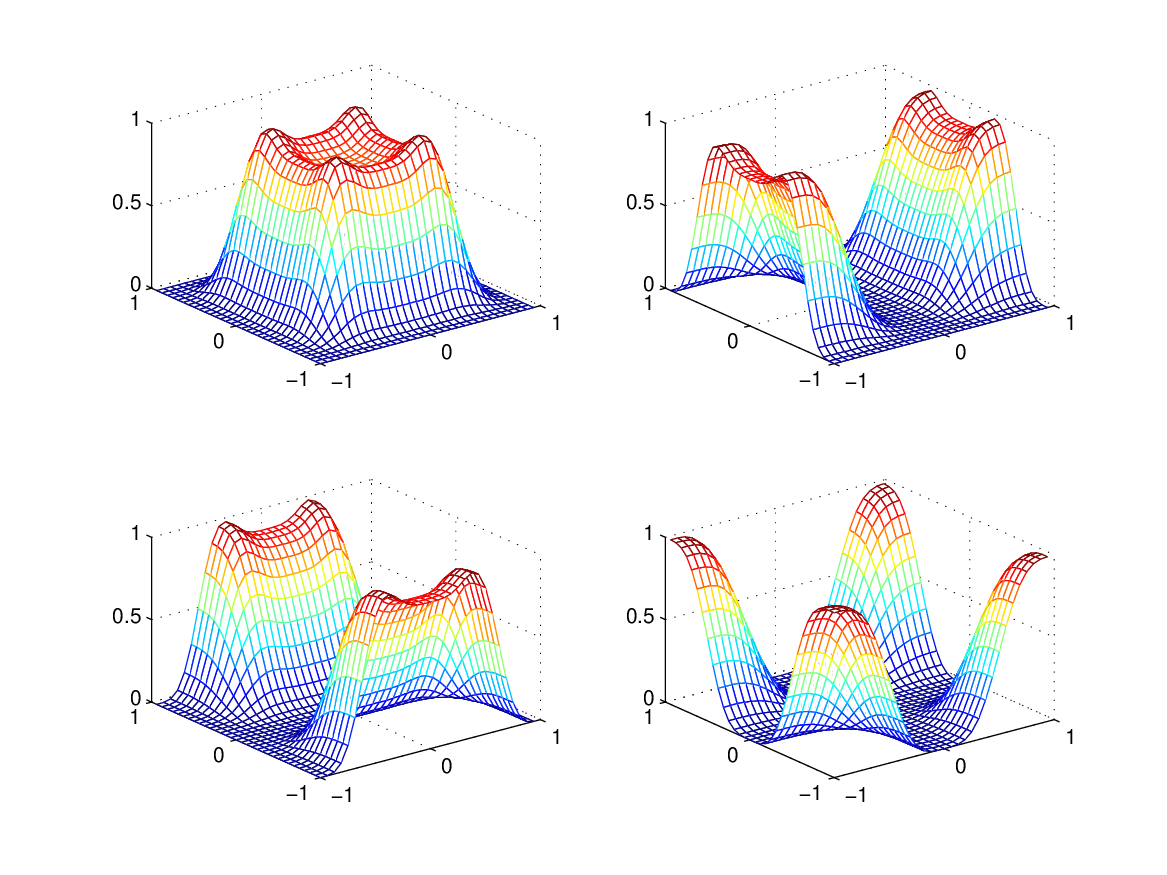}
\caption{The magnitude of the primal tensor product masks that are comparable to the masks in Figure~\ref{figure:cosetsum_fr}}
\label{figure:tensor_fr}
\end{figure}

\medskip
It should be noted that the above properties (the space localization property discussed in Example~5, and the frequency responses, the vanishing moments, and the smoothness--in the sense of whether or not a wavelet system belongs to $L^2$--discussed in Example 6) of canonical coset sum wavelet systems may not hold true for other coset sum wavelet systems.

\subsection{Fast coset sum wavelet algorithms}
\label{subS:Algorithms}
Next we show that the canonical coset sum wavelet system can be associated with the fast algorithm with linear complexity whose complexity constant does not grow with the spatial dimension. When presenting and analyzing our algorithm below, we use mostly filters instead of masks that we have been used so far, as this approach will be more useful in practice. 

\medskip\noindent{\bf Fast Coset Sum Wavelet Algorithms.}
Let $S$ and $U$ be biorthogonal univariate refinement masks, where $U$ is interpolatory. Let $G$ and $H$ be the filters associated with the refinement masks $S$ and $U$, respectively. In particular, $H$ is interpolatory (cf.~(\ref{eq:interpfilter})).

\medskip
{\obeylines{{\tt
{\it input}  $y_J: \Zd\to\RR$
\smallskip
{\bf (1) Decomposition Algorithm:}
$a_G=-2^\dm+2+(2^\dm-1)G(0)$
for $j=J,J-1,\cdots,1$
\ for $k\in\Zd$
\ \ $y_{j-1}(k)$
\ \ \ $=\disp{1\over 2^\dm}(a_Gy_j(2k)+\disp\sum_{\nu\in \set'}\disp\sum_{L\in\ZZ\bks 0} G(L)y_j(2k+L\nu))$ (i)
\ end
\ for $\nu\in\set'$ and $k\in\Zd$
\ \ $w_{\nu,j-1}(k)$
\ \ \ $=\disp{1\over 2}(y_j(2k+\nu)-\disp\sum_{m\equiv 1}H(m)y_{j}(2k+(1-m)\nu))$ (ii)
\ end
end

\medskip
{\bf (2) Reconstruction Algorithm:}
for $j=1,\cdots,J-1,J$
\ for $k\in\Zd$
\ \ $y_j(2k)$
\ \ \ $=y_{j-1}(k)-{\disp1\over \disp2^{\dm-1}}\disp\sum_{\nu\in\set'}\disp\sum_{L\in\ZZ}G(2L+1)w_{\nu,j-1}(k+L\nu)$ 
\ \ \ \ \ \ \ \ \ \ \ \ \ \ \ \ \ \ \ \ \ \ \ \ \ \ \ \ \ \ \ \ \ \ $\:$ (iii)
\ end
\ for $\nu\in\set'$ and $k\in\Zd$
\ \ $y_j(2k+\nu)$
\ \ \ $=2w_{\nu,j-1}(k)+\disp\sum_{m\equiv 1}H(m)y_{j}(2k+(1-m)\nu)$ (iv)
\ end
end
}}}

Given coarse coefficients $y_j$ at level $j$, Decomposition Algorithm first computes the lower level coarse coefficients $y_{j-1}$, and then the wavelet coefficients $w_{\nu,j-1}$, $\nu\in\set'=\{0,1\}^\dm\bks0$. The coefficients $y_{j-1}$ and $w_{\nu,j-1}$ are obtained by filtering (using the $\dm$-D filter $g$ associated with the coset sum refinement mask $\mathcal{C}_\dm[S]$ for $y_{j-1}$ and the $\dm$-D filter $h_\nu$ associated with the primal coset sum wavelet mask $t_\nu$ for $w_{\nu,j-1}$) followed by downsampling, as is typically done in wavelet decomposition process (see, for example, \cite{Da}). Since the $\dm$-D mask $\mathcal{C}_\dm[S]$ can be written in terms of $1$-D mask $S$ (cf. Definition~\ref{def:cosetsum}), the associated $\dm$-D filter $g$ can be written in terms of $1$-D filter $G$ (cf. (\ref{eq:filter})). Similarly, from the fact that the $\dm$-D mask $t_\nu$ can be written in terms of $1$-D mask $U$ (cf. (\ref{eq:nDwaveletmask})), the associated $\dm$-D filter $h_\nu$ can be written in terms of $1$-D filter $H$. Taking into account these observations, we get the above simple expressions for $y_{j-1}$ and $w_{\nu,j-1}$ in Step {\tt (i)} and  {\tt (ii)}, respectively. In Step {\tt (ii)} and {\tt (iv)}, $m\equiv 1$ is used to mean that $m$ is congruent to $1$ in modulo $2$, i.e., $m$ is an odd integer. 

Reconstruction Algorithm recovers $y_j$ from $y_{j-1}$, and $w_{\nu,j-1}$, $\nu\in\set'$. It first recovers $y_j$ at even points (cf.~Step {\tt (iii)}) and then at all other points (cf.~Step {\tt (iv)}). Step {\tt (iii)} is a key step in making our algorithm fast (cf.~Complexity discussion below). It is easy to show that the identity in Step {\tt (iii)} holds true for our canonical coset sum wavelet system (see Appendix~\ref{subS:stepiii} for proof), but it need not be true for other coset sum wavelet systems. Step {\tt (iv)} is simply a reverse process of Step {\tt (ii)} and is possible since the only $y_j$ values we need at this step are the values at even points, and these are already computed in Step {\tt (iii)}.

For a given pair of $1$-D masks $S$ and $U$ that generates the canonical coset sum wavelet system, the filters $H$ and $G$ in the algorithm can be computed easily. For example, for the $\dm$-D coset sum Haar wavelet system in Example~5, both $H$ and $G$ are the $1$-D Haar refinement filter (cf. Example 1). For the coset sum wavelet system in Example 6 that is generated from $S_{4}$ (cf.~(\ref{eq:defS2k})) and $U_{4}$ (cf.~(\ref{eq:defU2k})), the filters are given as 
$$H(K)=\cases{1,&$K=0$,\cr
	{9\over 16},&$K=\pm1$,\cr
	-{1\over 16},&$K=\pm3$,\cr
     0,&otherwise,\cr}
G(K)=\cases{{696\over512},&$K=0$,\cr
	{288\over512},&$K=\pm1$,\cr
	-{126\over512},&$K=\pm2$,\cr
	-{32\over512},&$K=\pm3$,\cr
	{36\over512},&$K=\pm4$,\cr
	-{2\over512},&$K=\pm6$,\cr
     0,&otherwise.}$$

We note that the above algorithms for the canonical coset sum wavelet systems are not redundant: the number of coefficients after the decomposition algorithm is approximately the same as the number of input samples, assuming that the filter length of each filter involved in the algorithm is negligible compared to the number of input samples. 

\medskip\noindent{\bf Complexity.}
We measure complexity by counting the number of operations needed in order
to fully derive $y_{j-1}$, and $w_{\nu,j-1}$, $\nu\in\set'$, from $y_j$, and add the number of
operations needed for the reconstruction. Here, we count only multiplicative operations such as multiplication and division, as counting additive operations gives a similar result. 

As in the fast tensor product wavelet algorithms discussed in \S\ref{subS:tensorproduct}, the complexity here is linear, i.e. $\sim
CN$, with
$N$ the number of nonzero entries in $y_J$, and $C$ some constant independent of $y_J$. We refer to this constant as the {\it constant in the complexity bound} or simply as the {\it complexity constant} throughout this paper. 

We now estimate the complexity constant for fast coset sum wavelet algorithms by computing the mean number of operations per single entry in $y_J$. Suppose $\alpha$ and $\beta$ are the numbers of nonzero entries of the filters $G$ and $H$, respectively. Then, the number of operations that are needed to process the portion of $y_J$ that lies on the vertices of a unit cube is the sum of 
\begin{itemize}
\item $2\dm-1$ (for computing $a_G$),   
\item $(2^\dm-1)(\alpha-1)+\dm+1$ (for Step {\tt (i)}),
\item $2(2^\dm-1)\beta$ (for Step {\tt (ii)} and {\tt (iv)}), and
\item $(2^\dm-1){{\alpha+1}\over 2}+\dm-1$ (for Step {\tt (iii)}).
\end{itemize} 
After computing the sum, we divide it by $2^\dm$, which is the number of vertices in the unit cube, in order to obtain the cost per entry of performing one complete cycle of decomposition/reconstruction. As a result, we get 
$${3\over 2}\alpha + 2\beta$$ 
as an upper bound for the cost per entry. 
Therefore, the algorithm has complexity $({3\over 2}\alpha + 2\beta)N$, and the constant in the complexity bound in this case is ${3\over 2}\alpha + 2\beta$, which does not increase as the spatial dimension $\dm$ increases. A similar argument is used in \cite{HR3} to compute the complexity constant for the algorithm introduced there. 

Contrary to the complexity constant of the fast coset sum wavelet algorithm that we just computed, in the tensor product case the constant grows with the dimension (cf. \S\ref{subS:tensorproduct}). There are a couple of components that make the coset sum wavelet algorithm this fast. First, as we discussed in \S\ref{subS:cosetsumwaveletsystems} (cf.~(\ref{eq:nDwaveletmask})), the wavelet masks of the coset sum wavelet system are essentially univariate. Second, as we can see from the above algorithms (cf.~Step~{\tt (iii)}), the reconstruction step can be done by completely bypassing the dual wavelet filters. This is reminiscent of the Laplacian pyramid \cite{BA} (cf.~Appendix~\ref{subS:proofofThm2}) and its variant \cite{Hur}, which have trivial reconstruction steps that are simply reverse processes of decomposition steps. As a consequence, our algorithm inherits an asymmetry in the roles of the lowpass filters from the Laplacian pyramid. Hence the 1-D lowpass filters $G$ and $H$ in our fast coset sum wavelet algorithm play different roles.
\endproof

\medskip\noindent
{\bf Remark 1.}
It is well known that any (MRA-based) biorthogonal wavelet system (associated with FIR filters) has decomposition and reconstruction algorithms with linear complexity (see, for example, \cite{Ma1,Ma2,CDF,SN}). In fact, as we alluded to earlier, our fast coset sum decomposition algorithm is nothing but this generic decomposition algorithm for the given canonical coset sum wavelet system. However, our fast coset sum reconstruction algorithm is fundamentally different from this generic reconstruction algorithm: the dual coset sum wavelet filters (cf.~(\ref{eq:tnudu})) that are not used for our reconstruction algorithm are used for the generic one. As a result, our canonical coset sum wavelet system in Theorem~\ref{thm:cosetsumwavelet} has two different algorithms (the fast coset sum wavelet algorithm and the generic one) and the generic algorithm is always slower than the fast coset sum wavelet algorithm.
\endproof

\medskip\noindent
{\bf Remark 2.}
For any biorthogonal wavelet system, multiplying the primal part with some constant factors and dividing the dual part with the same factors will still make a biorthogonal wavelet system. As the functions in these two systems differ only by constants, it is clear that the two systems are essentially the same and most of their properties--including the support and the smoothness--are kept the same.

For the above fast coset sum wavelet algorithms, this means that the decomposition step can be rewritten with explicit {\it normalization factors} $c, d > 0$ as
$$y_{j-1}^{new}(k)=cy_{j-1}(k),\quad w_{\nu,j-1}^{new}(k)=dw_{\nu,j-1}(k)$$
where $y_{j-1}(k)$, $w_{\nu,j-1}(k)$ are defined as in Step {\tt (i)}-{\tt (ii)}, and that the reconstruction step can be modified accordingly: the expressions in the right-hand side of Step {\tt (iii)}-{\tt (iv)} can be rewritten in terms of  
$${1\over c}y_{j-1}^{new}(k),\quad {1\over d}w_{\nu,j-1}^{new}(k)$$
in place of $y_{j-1}(k)$, $w_{\nu,j-1}(k)$ that are currently used. In this sense, our original fast coset sum wavelet algorithms can be considered as a special case when $c=d=1$. These normalizations are used throughout this paper except in~\S\ref{subS:Experiments}  (see the discussion below and the footnote in the subsection).

When normalization factors are used for the wavelet algorithms, most properties of the algorithms are not affected. For example, fast coset sum wavelet algorithms with normalization factors will still have the linear complexity with the complexity constant that is independent of $\dm$. 
However the use of different normalization factors may result in different performance in practice \cite{G}. For example, when the algorithms are used for nonlinear approximation with multiple levels (cf.~\S\ref{subS:Experiments}), the coefficients are multiplied by constant factors and these factors propagate recursively to other coefficients in lower levels and, as a result, the use of normalization factors may change the relative size of the coefficients.
\endproof

\medskip
Below we compare the fast tensor product wavelet algorithms with the fast coset sum wavelet algorithms, both based on the Deslauriers-Dubuc mask and its dual mask in \S\ref{subS:cosetsumproperties}.

\medskip\noindent
{\bf Example 7: Fast tensor product wavelet algorithms vs. fast coset sum wavelet algorithms.}
In this example, we compare the algorithms for two different families of $\dm$-D wavelet systems constructed from the same univariate refinement masks by using two different methods: (I) the tensor product and (II) the coset sum. We consider the same univariate refinement masks as in Example 4 and 6, i.e. $U_{2k}$ (interpolatory) and $S_{2k}$ as in (\ref{eq:defU2k}) and (\ref{eq:defS2k}), respectively. 
It is easy to see that the number of nonzero entries of the filter associated with $S_{2k}$ is $\alp=8k-3$, and the number of nonzero entries of the filter associated with $U_{2k}$ is $\beta=2k+1$.

Then complexity constant for each algorithm is given as follows:
\begin{enumerate}[(I)]
\item (Tensor Product Case) From \S\ref{subS:tensorproduct}, the complexity constant for the fast tensor product algorithm is $(\alp+\beta)n=(10k-2)n$, which grows linearly with the dimension.
\item (Coset Sum Case) From the above Complexity discussion, the complexity constant for the fast coset sum wavelet algorithm is ${3\over 2}\alpha + 2\beta={3\over 2}(8k-3)+2(2k+1)=16k-{5\over 2}$, which {\it does not} grow with the dimension.
\end{enumerate}
Therefore, remarkably, if we fix $k$ (hence the number of vanishing moments of the wavelet system) and increase the dimension $\dm$, then the {\it complexity constant stays the same for the coset sum case, whereas it increases for the tensor product case}.
\endproof

\begin{figure}[t]
\centering
\subfigure[Original image ``part of lena"]{
\includegraphics[width=0.22\textwidth]{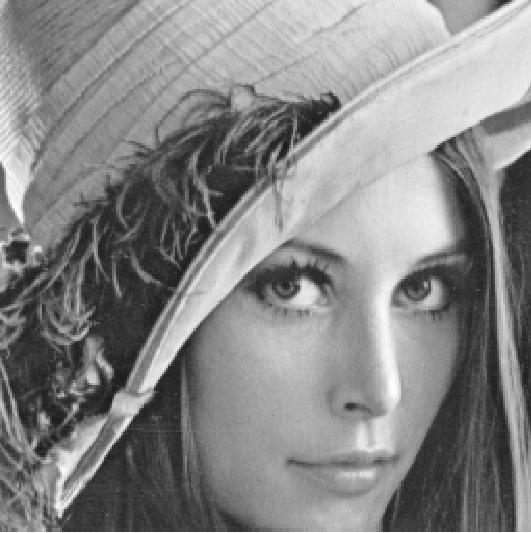}
}~~~
\subfigure[PSNR of reconstructed image]{
\includegraphics[width=0.23\textwidth]{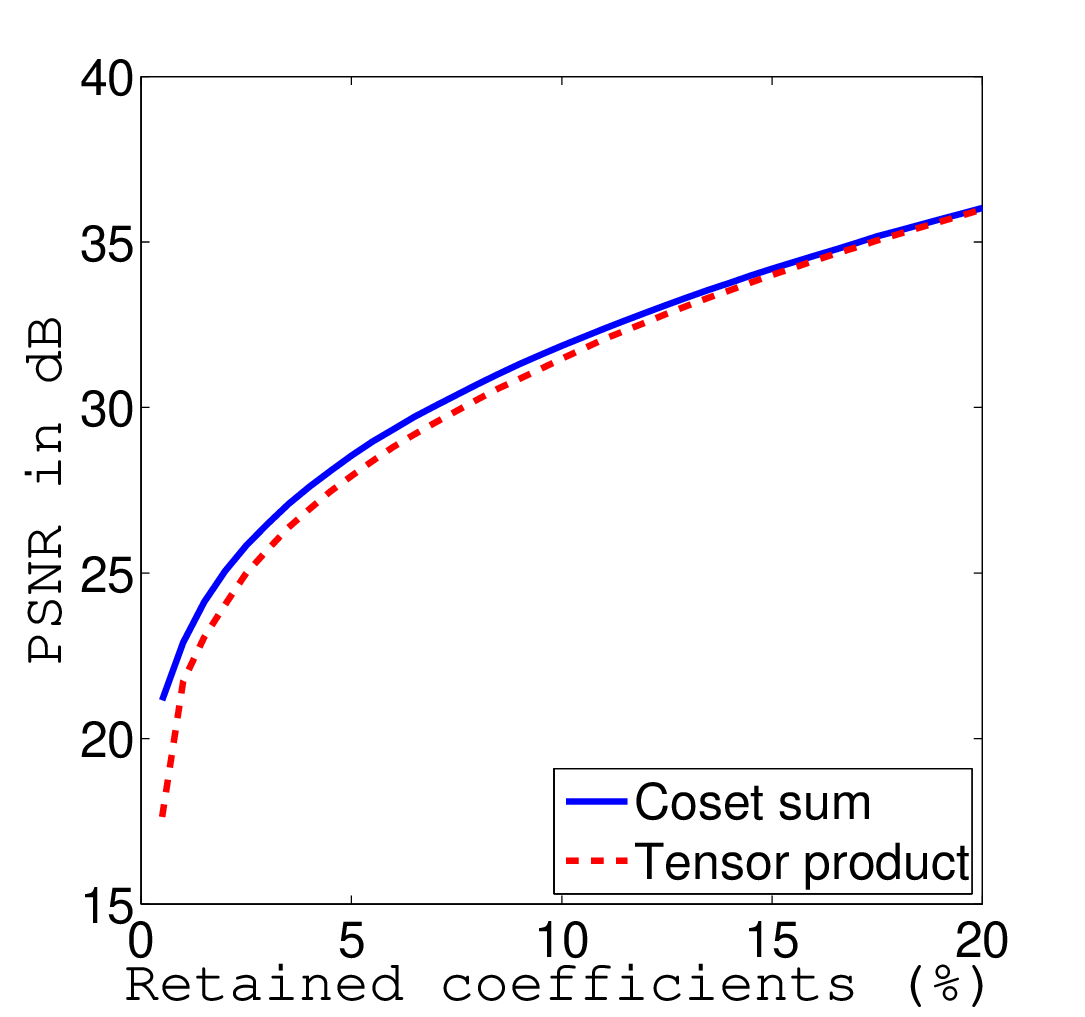}
}
\subfigure[Reconstructed by tensor product]{
\includegraphics[width=0.22\textwidth]{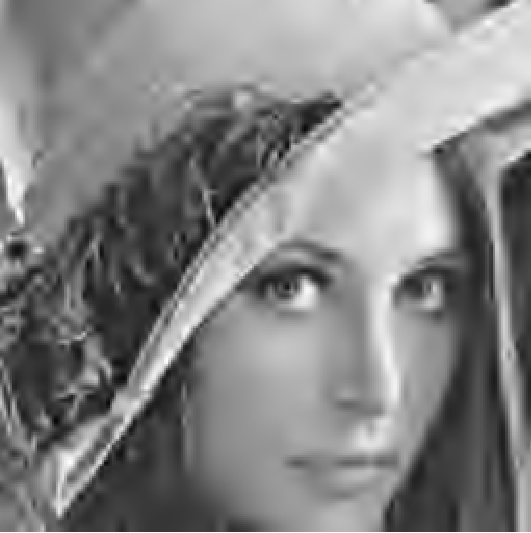}
}~~~
\subfigure[Reconstructed by coset sum]{
\includegraphics[width=0.22\textwidth]{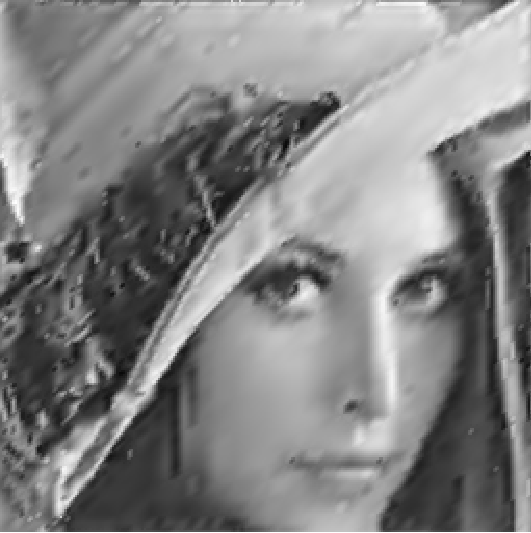}
}
\caption{Comparison of approximation power of tensor product and coset sum for (a) original image ``part of lena": $5$-level-down decomposition and reconstruction using $3\%$ largest coefficients. (c) The reconstructed image by tensor product, PSNR $=25.7$ dB. (d) The reconstructed image by coset sum with $\set'=\{(1,0),(0,1),(1,1)\}$, showing sharper edges and better visual quality, with improved PSNR $=26.5$ dB. (b) PSNR of reconstructed images over different percentage of retained coefficients ($0.5\%$-$20\%$). This experiment shows that the reconstructed images by coset sum have higher PSNR (solid blue), hence better approximation quality than those by tensor product (dotted red) over the range $0.5\%$-$20\%$ for image ``part of lena".}
\label{figure:lena}
\end{figure}

\begin{figure}[t]
\centering
\subfigure[Original image ``wood45"]{
\includegraphics[width=0.22\textwidth]{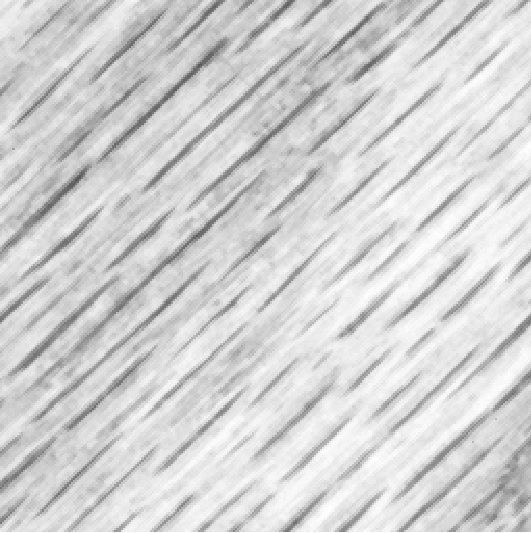}
}~~~
\subfigure[PSNR of reconstructed image]{
\includegraphics[width=0.23\textwidth]{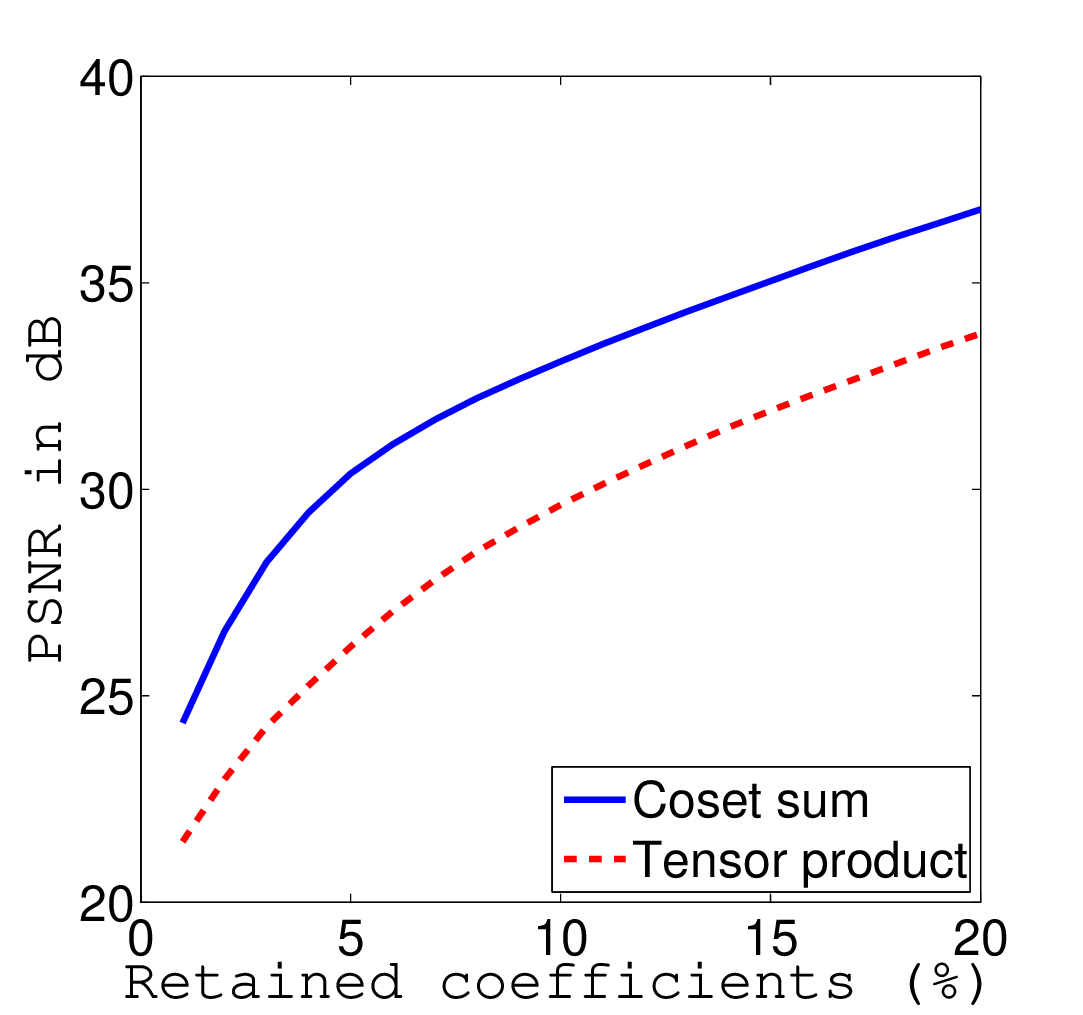}
}
\subfigure[Reconstructed by tensor product]{
\includegraphics[width=0.22\textwidth]{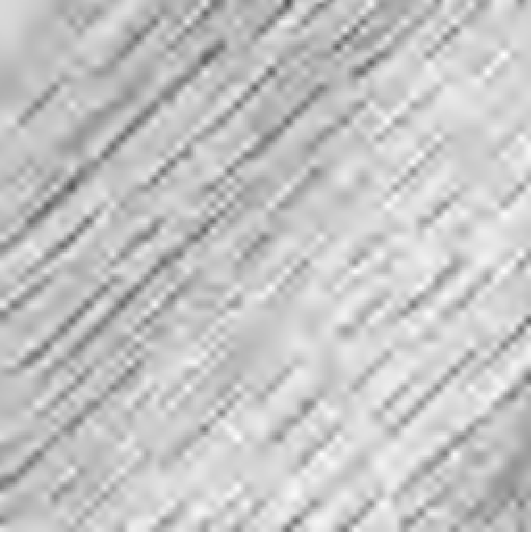}
}~~~
\subfigure[Reconstructed by coset sum]{
\includegraphics[width=0.22\textwidth]{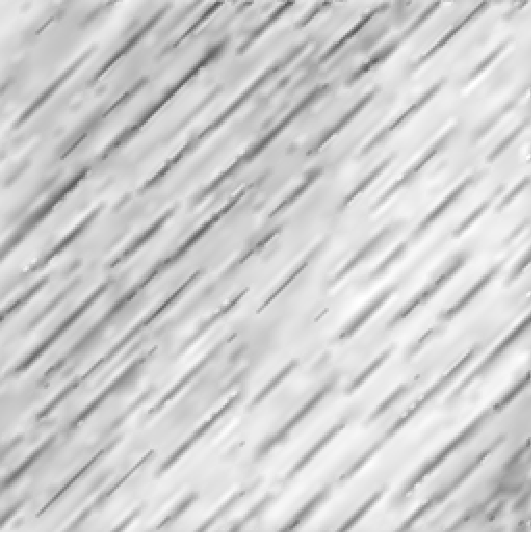}
}
\caption{Comparison of approximation power of tensor product and coset sum for (a) original image ``wood45": $5$-level-down decomposition and reconstruction using $3\%$ largest coefficients. (c) The reconstructed image by tensor product, with blurry recovered content and PSNR $=24.3$ dB. (d) The reconstructed image by coset sum with $\set'=\{(1,0),(0,1),(1,1)\}$, showing better approximation to the original image and better improved PSNR $=28.2$ dB. (b) PSNR of reconstructed images over different percentage of retained coefficients ($0.5\%$-$20\%$). The improvement of PSNR in this example is larger than that in ``part of lena" example due to the stronger directional content in image ``wood45".}
\label{figure:wood45}
\end{figure}

\begin{figure}[t]
\centering
\subfigure[Original image: ``wood"]{
\includegraphics[width=0.22\textwidth]{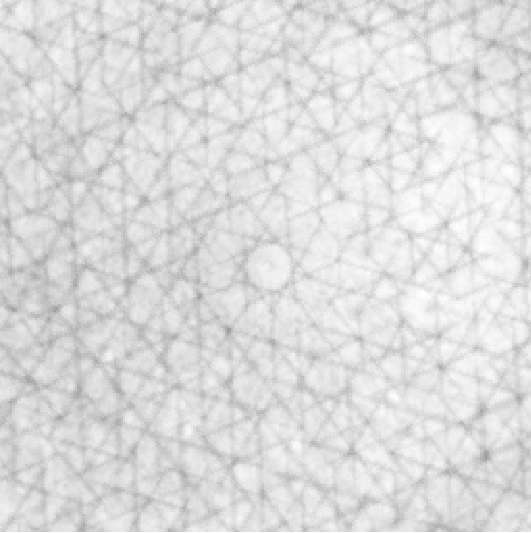}
}~~~
\subfigure[PSNR of reconstructed image]{
\includegraphics[width=0.23\textwidth]{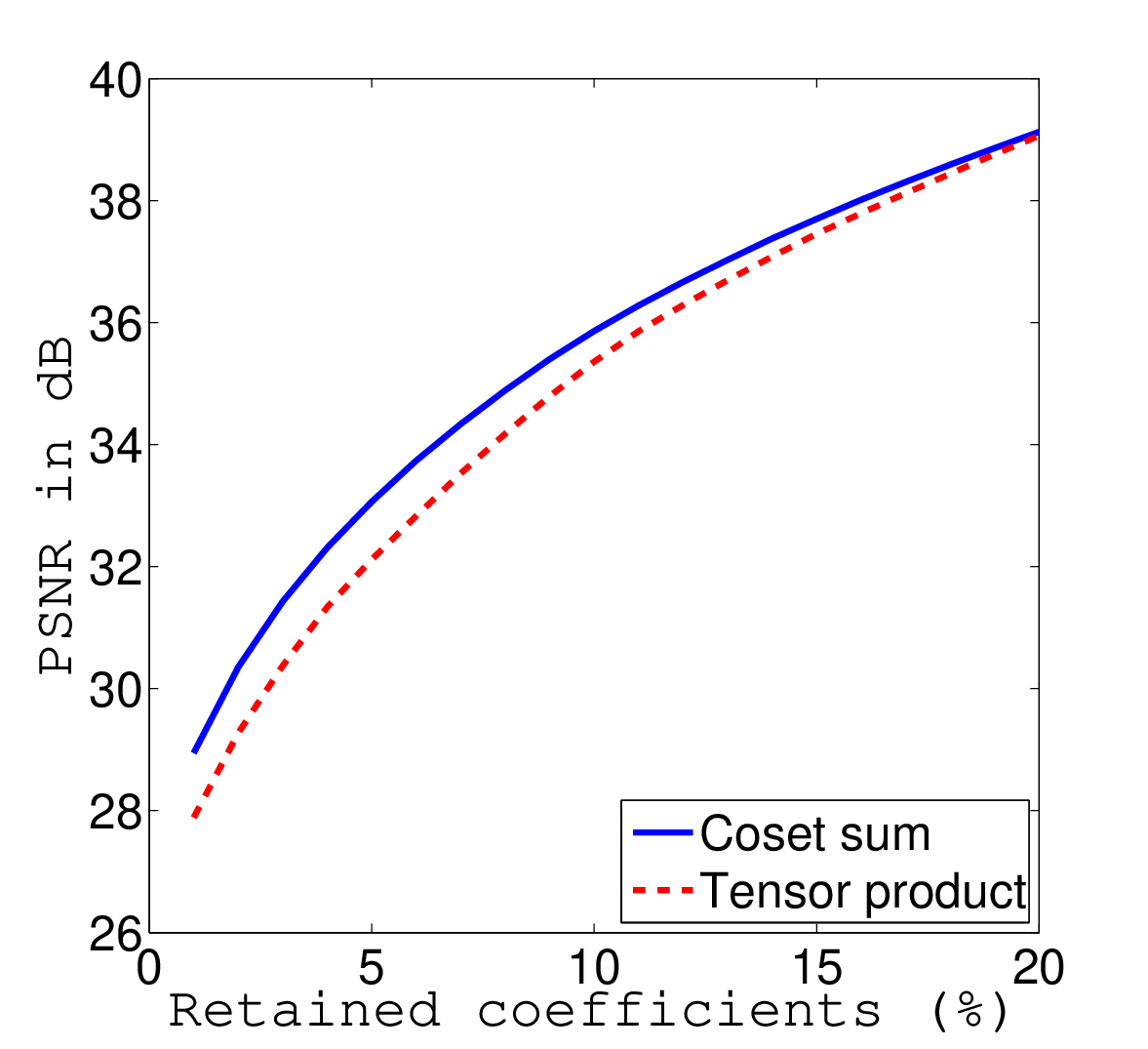}
}
\subfigure[Reconstructed by tensor product]{
\includegraphics[width=0.22\textwidth]{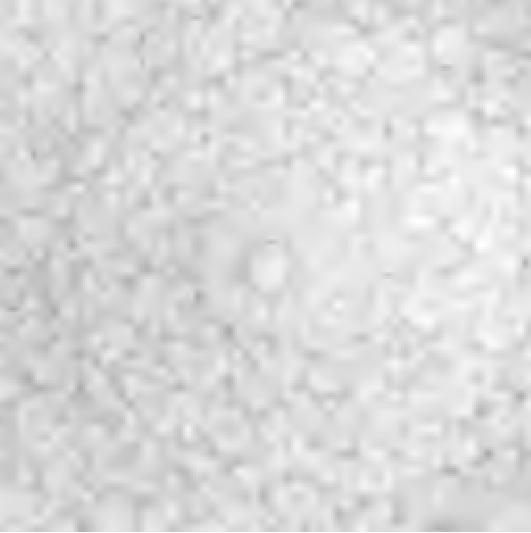}
}~~~
\subfigure[Reconstructed by coset sum]{
\includegraphics[width=0.22\textwidth]{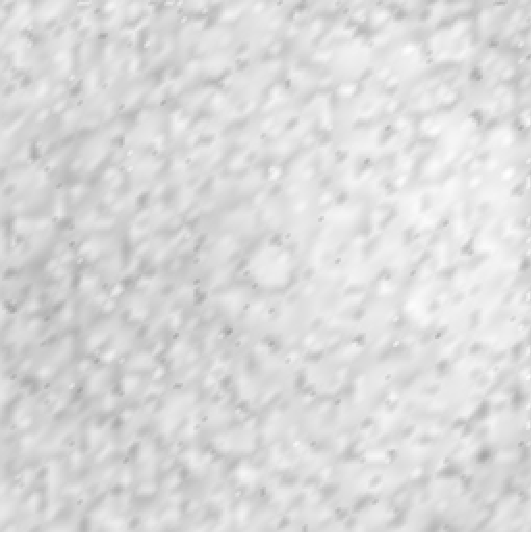}
}
\caption{An example of image with multiple directions. Comparison of approximation power of tensor product and coset sum for (a) original image ``wood": $5$-level-down decomposition and reconstruction using $3\%$ largest coefficients. (c) The reconstructed image by tensor product, PSNR $=30.4$ dB. (d) The reconstructed image by coset sum with $\set'=\{(1,0),(0,1),(1,1)\}$, PSNR $=31.4$ dB. (b) PSNR of reconstructed images over different percentage of retained coefficients ($0.5\%$-$20\%$).}
\label{figure:wood}
\end{figure}

\subsection{Experiments}
\label{subS:Experiments}
In this subsection we present some experimental results of the canonical coset sum wavelet system, in comparison with the tensor product wavelet system. We have implemented the fast coset sum wavelet algorithms in Matlab. The program takes a pair of $1$-D biorthogonal refinement filters as input and works for $2$-D images. We compare our Matlab program with the standard Matlab implementation of $2$-D fast tensor product wavelet algorithms: {\it wavedec2} (for decomposition) and {\it waverec2} (for reconstruction) in Wavelet Toolbox \cite{WaveletToolbox}. \footnote{When comparing the implementation of two different wavelet systems, it is important to use the same normalization factors as they may affect the performance (cf. Remark 2 after Complexity discussion). Normalization factors $c=d=2$ are used for implementing our coset sum wavelet system since these are the normalization factors used for the tensor product Matlab implementation when seen in terms of a $2$-D generalization of the related 1-D concepts (i.e. the DC and Nyquist gains) \cite{RJJ}.}{For} the experiments in this subsection, we use two different $2$-D wavelet systems obtained from the same $1$-D filters, $U_4$ (shown in Figure \ref{figure:C2U4}) and $S_4$ (shown in Figure \ref{figure:C2S4}), but using two different methods, coset sum and tensor product. For the coset sum wavelet system, we initially choose $\set'=\{(1,0),(0,1),(1,1)\}$ as the nonzero coset representatives. The two wavelet systems constructed this way are discussed in Example 6, and the complexity constants of their algorithms are compared in Example 7. 

We first compare the running time of the fast coset sum wavelet algorithm with the fast tensor product wavelet algorithm. We apply the two wavelet systems constructed as above to test images, ``part of lena''\footnote{
This image is obtained from the image ``lena" ($512 \times 512$) in the image repository http://links.uwaterloo.ca/Repository.html by taking its central part (of size $256 \times 256$).
} in Figure \ref{figure:lena}(a), and ``wood45''\footnote{
This image is obtained from the image ``wood.000" ($512 \times 512$) in the SIPI Image Database http://sipi.usc.edu/database/database.php?volume=rotate by rotating $45^{\circ}$ clockwise, and taking its central part (of size $256 \times 256$).
} in Figure \ref{figure:wood45}(a), both of which have directional content along the diagonal direction. Here, the diagonal direction, or $45^{\circ}$ from the positive $x$-axis, is chosen because it can highlight the benefit of our coset sum wavelet system over the tensor product wavelet system: it is one of the directions that may be captured well by our coset sum system since $\tan 45^{\circ}={1\over 1}$ and $(1,1)\in\set'$, while it is one of the directions that may not be captured well by the tensor product system since it is not a coordinate direction. Both are of size $256 \times 256$, and we perform $5$-level-down decomposition and reconstruction. The running time for ``part of lena'' is about $0.0279$ seconds~(s) on average for tensor product algorithm and about $0.0161$ s on average for coset sum algorithm, on a Mac 4G 1333MHz laptop. The running time for ``wood45" is about $0.0283$ s on average for tensor product and about $0.0162$ s on average for coset sum. We also tried several other images, both with and without directional content, for various levels of decomposition and reconstruction, and obtained essentially the same results: the coset sum algorithms were faster than the tensor product ones. These experiments confirm our theoretical finding in the previous subsection (cf. Example 7).

\begin{figure*}[t]
\centering
\subfigure[Reconstructed ``part of lena"]{
\includegraphics[width=0.22\textwidth]{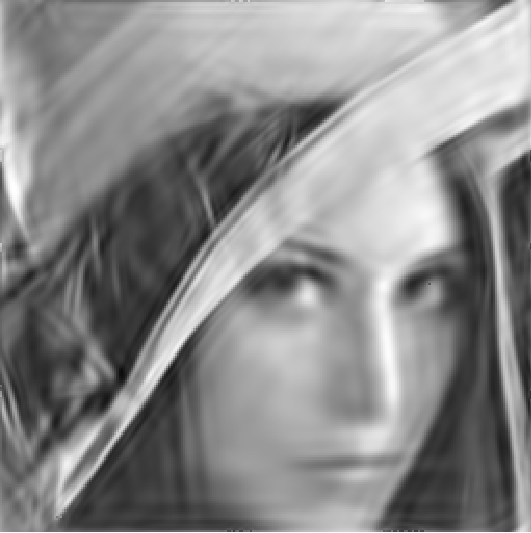}
}~~~
\subfigure[Reconstructed ``wood45"]{
\includegraphics[width=0.22\textwidth]{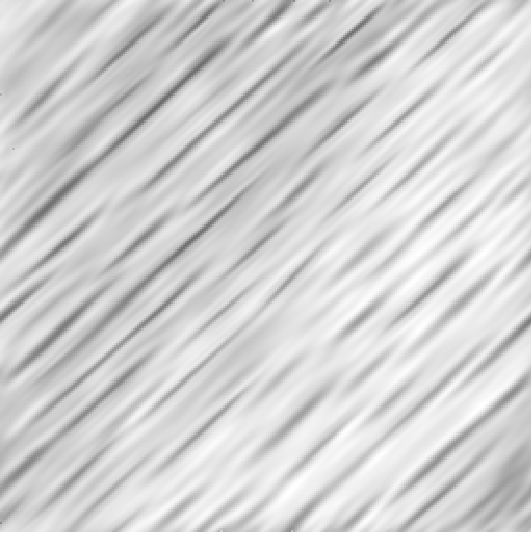}
}~~~
\subfigure[Reconstructed ``wood"]{
\includegraphics[width=0.22\textwidth]{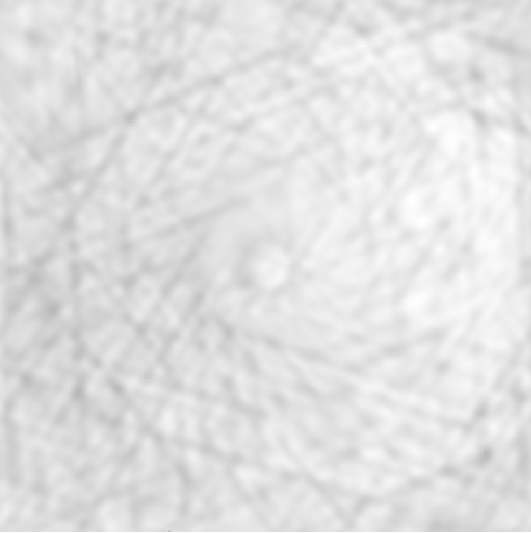}
}
\caption{Reconstructed images by curvelet from a $5$-level-down decomposition and retaining $3\%$ largest coefficients, for original images ``part of lena" (Figure 11(a)), ``wood45" (Figure 12(a)) and ``wood" (Figure 13(a)). Experiments are done by using the wrapping-based method (\cite{CDDY}) implemented in CurveLab Toolbox (\cite{CurveLabToolbox}) with the default parameter setting. PSNR for reconstructed images are: (a) $25.4$ dB (b) $27.7$ dB and (c) $30.7$ dB. }
\label{figure:curvelet}
\end{figure*}

We now compare the approximation power of these two wavelet systems. For this, we first decompose a fixed image using the two wavelet systems, then recover the image from the $M$-largest decomposed coefficients
(in magnitude), and finally compare the Peak-Signal-to-Noise-Ratio (PSNR) of the two reconstructed images. The reconstructed image with higher PSNR indicates better approximation to the original image. 
For the image ``part of lena'', the reconstructed image using coset sum system shows sharper edges along the diagonal direction and better visual quality than those of tensor product system, and has a slightly higher PSNR
(Figure~\ref{figure:lena}(a)(c)(d)). In this experiment, we found that as long as the percentage of retained coefficients is not too large, the coset sum system has slightly higher PSNR (see Figure \ref{figure:lena}(b) for the range 0.5\%-20\%). For higher percentage, the coset sum system showed either comparable or slightly worse performance.
Another example using the texture image ``wood45" is also presented (Figure~\ref{figure:wood45}). The reconstructed image using coset sum system shows even better performance in this example in terms of PSNR, which is probably due to its stronger directional content. Contrary to the previous experiment with ``part of lena'', in this experiment, the coset sum system showed consistently higher PSNR for all the percentages. From this experiment, we see that coset sum wavelet system shows promising results when applied to images with strong directional content that matches with the directions of the coset sum primal wavelet filters.

We recall that the directional preference of the coset sum primal wavelet filters can be specified by the associated coset representatives in $\set'$. If a dominant direction of the given image does not match with the preferred directions of the coset sum, the coset sum wavelet system may no longer perform well. In such a case, a different set $\set'$ may be used to match the image's direction (cf.~Remark 1 after Definition~\ref{def:cosetsum} in \S\ref{subS:cosetsumrefinementmasks}). For example, if the dominant direction is $-60^{\circ}$ from the positive $x$-axis, then since $\tan({-60}^{\circ})=-\sqrt{3}\approx\frac{2}{(-1)}$, the coset representative $(-1,2)$ can be used in place of $(1,0)$ in the default nonzero coset representatives $\set'=\{(1,0),(0,1),(1,1)\}$.

For many images, it may not be possible to match the directions of the image with the directions of the coset sum. In order to see how the coset sum system would perform for these images in comparison with the tensor product system, we apply the two systems to the test image, ``wood"\footnote{This image is produced by overlying 5 rotated versions of the image ``wood.000", which is used to generate the image ``wood45'' in Figure \ref{figure:wood45}(a), and taking its central part (of size $256 \times 256$).} in Figure~\ref{figure:wood}(a). It has $5$ different directional content ($25^\circ, 60^\circ$, $95^\circ$, $130^\circ$ and $165^\circ$ from the positive $x$-axis), and it is impossible for us to choose the nonzero coset representatives that match all the directions presented in the image. The reconstructed image using coset sum with the default coset representatives shows sharper edges along the directions near $45^\circ$, such as $60^\circ$ and $25^\circ$, and better visual quality than those of tensor product system, and has a slightly higher PSNR (see Figure \ref{figure:wood}(a)(c)(d)). For the directions that are significantly different from the preferred directions of the coset sum, such as $130^\circ$, it does not show sharp edges anymore, but the reproduced image using tensor product does not show sharp edges either. We found that the overall PSNR result of this image is similar to that of ``part of lena'': the reconstructed image by coset sum has slightly higher PSNR as long as the percentage of retained coefficients is not too large (see Figure \ref{figure:wood}(b) for the range 0.5\%-20\%), but for higher percentage it showed either comparable or slightly worse performance.

We notice that in the reconstructed image in Figure \ref{figure:wood}(d) certain directions are pronounced more strongly than others despite that the original image in Figure \ref{figure:wood}(a) does not have that characteristic. This is due to the lack of rotational symmetry (\cite{RRS}) of the coset sum refinable functions with the default coset representatives (cf. Figure \ref{figure:s4u4refinement} and \ref{figure:cosetsum_fr}(a)). A remedy for this can be obtained by choosing a set $\set'$ that gives (roughly) equi-angled directions.  For example, when $\dm=2$, by setting $\set'=\{(1,1),(-4,1),(1,-4)\}$, a rotational symmetry can be roughly achieved as it gives equi-angled directions $45^\circ, 165^\circ$, and $285^\circ$ from the positive $x$-axis. However, in general it is not easy to overcome the lack of rotational symmetry of the coset sum refinable functions. For example, if we use the above non-default choice of $\set'$ for $\mathcal{C}_2[S_4]$ and $\mathcal{C}_2[U_4]$ in Example 4, our computation shows that the refinable function associated with $\mathcal{C}_2[S_4]$ is still in $L^2(\RR^2)$, but the one associated with $\mathcal{C}_2[U_4]$ is not. Therefore obtaining coset sum refinable functions that are in $L^2(\Rd)$ with rotational symmetry may not be always possible even for the case of $\dm=2$.

As a passing remark, we make a brief comment on comparison to the curvelet system \cite{CaDo}, which is a state-of-the-art system for representing 2-D and 3-D data effectively using their geometric structure. Before presenting the image experiments using curvelets, we note that any comparison between the curvelet system and the coset sum wavelet system should be made with care as they are very different in nature. For one thing, the curvelet system is not a wavelet system constructed by using a method that works for any multi-D, which is our main interest in this paper. Besides, the curvelet system is highly redundant and its fast algorithm is slower than that of the tensor product and the coset sum system. 

With all these in mind, we perform the curvelet transform to the above test images using the Matlab implementation ({\it fdct\_wrapping.m} for decomposition and {\it ifdct\_wrapping.m} for reconstruction) of 2-D discrete curvelets (\cite{CDDY}) in CurveLab Toolbox \cite{CurveLabToolbox}. Even after fixing the decomposition level and the percentage of retained coefficients, there are still some parameters to be chosen in the curvelet codes, and the PSNR of reconstructed images is quite sensitive to the choice of these parameters. For reconstructed images using curvelet system with the default parameter setting, the PSNR is either between the PSNR of tensor product and that of coset sum (``wood45" and ``wood"), or slightly lower than the PSNR of tensor product (``lena") (see Figure \ref{figure:curvelet}). In terms of the visual quality, the curvelet system is superior to the other two systems in both capturing different directional content and keeping rotational symmetry in an image (see Figure \ref{figure:curvelet}(c)), but it may add some strong directional artifacts to the reconstructed image (see Figure \ref{figure:curvelet}(a)(b)). We conclude that a  complete comparison between the coset sum system and the curvelet system requires more thorough study on them.

\section{Summary and outlook}
\label{S:summary}
In this paper we presented the coset sum as an alternative method to the tensor product in constructing decomposable multivariate refinement masks.
The decomposable refinement mask constructed by coset sum can be written as the sum, instead of the product, of the univariate refinement masks.
We showed that the coset sum can provide many important features of the tensor product, such as preserving the biorthogonality of the univariate refinement masks and the availability of a wavelet system with fast algorithms.

Since the coset sum provides a way to obtain a pair of multivariate biorthogonal refinement masks, it can be combined with any method for finding wavelet masks to construct a (MRA-based biorthogonal) multivariate wavelet system.
There has been only limited progress in a systematic construction of non-tensor based multivariate wavelet systems. The coset sum adds a new opportunity to this end.

By specifying wavelet masks as described in \S\ref{subS:cosetsumwaveletsystems}, we constructed a particular class of coset sum wavelet systems that can be associated with fast algorithms. Such algorithms are referred to as fast coset sum wavelet algorithms. 

The fast tensor product wavelet algorithm has linear complexity, but the constant in the complexity bound increases as the spatial dimension increases. On the other hand, the constant in the (linear) complexity bound for the fast coset sum wavelet algorithm is independent of the dimension. Thus, when the spatial dimension is high, the coset sum wavelet algorithm can be {\it faster} than the tensor product wavelet algorithm. 

Coset sum is not necessarily the only alternative to the tensor product. Rather, despite of its limitations in processing images, its existence with desirable features suggests that it may be worthwhile to develop and practice alternative methods to the tensor product for constructing multivariate wavelet systems.

\appendix
\subsection{Proof of Theorem~\ref{thm:main}}
\label{subS:proofofThm1}
\subsubsection{Proof of part (a)}
Suppose $H$ and $h$ are the filters associated with masks $R$ and $\mathcal{C}_\dm[R]$. If $R$ is interpolatory, it is straightforward to show that $\mathcal{C}_\dm[R]$ is interpolatory. If $\mathcal{C}_\dm[R]$ is interpolatory, by (\ref{eq:interpfilter}), $h(0)=1$, and $h(k)=0$ if $k\in2\Zd\bks 0$. Then by (\ref{eq:filter}), $H(0)=1$. Moreover, $H(K)=0$ at all other even points, because if $H(K)\neq0$ at some even point $K\in2\ZZ\bks0$, then $h(k)=H(K)\neq0$ at $k=K\nu\in2\Zd\bks 0$, which contradicts to that $\mathcal{C}_\dm[R]$ is interpolatory. Therefore $R$ is also interpolatory.

\subsubsection{Proof of part (b)}
Without loss of generality, we may assume $\tilde{R}$ is interpolatory. We want to show that, $\mathcal{C}_\dm[R]$ and $\mathcal{C}_\dm[\tilde{R}]$ are biorthogonal if and only if $R$ and $\tilde{R}$ are biorthogonal.

    Let $R^{o}:=(R-R(\cdot+\pi))/2$ and $R^{e}:=(R+R(\cdot+\pi))/2$ be the odd and even
parts of $R$, respectively, and let $\tilde{R}^{o}$ be the odd part of $\tilde{R}$. Since $\tilde{R}$ is interpolatory, the even
part of $\tilde{R}$ is the constant $1/2$. It is easy to check $\forall\ome_1\in\TT$
\beaN
&&\overline{R^{o}(\ome_1)}\tilde{R}^{o}(\ome_1)={\disp 1\over \disp 2}-{\disp 1\over \disp
2}\overline{R^{e}(\ome_1)}\\
&&\Longleftrightarrow R\:\mbox{and}\:\tilde{R}\:\mbox{are biorthogonal}.
\eeaN
Here, as before, the overline is used to denote the
complex conjugate.

We will also need the following identities:
\be
\label{eq:charrunpiset}
  \sum_{\gam\in\pi\set}e^{-i\nu\cdot\gam}
  =\cases{2^\dm,& if $\nu=0$,\cr
          0,              & if $\nu\in\set'$.\cr}
\ee
Then from the definition of the coset sum (cf.~Definition~\ref{def:cosetsum}, (\ref{eq:simpleform}) and (\ref{eq:specialform})), biorthogonal condition (\ref{eq:refinebiormask}), and the above identities (\ref{eq:charrunpiset}), we have
\beaN
&{ }&\:\mathcal{C}_n[R]\: \mbox{and}\: \mathcal{C}_n[\tilde{R}] \:\mbox{are biorthogonal}\\
\Longleftrightarrow&&
\sum_{\gam\in\pi\set}(\overline{\mathcal{C}_\dm[R]}\mathcal{C}_\dm[\tilde{R}])(\ome+\gam)=1
,\quad\forall\ome\in\Td\\
\Longleftrightarrow&&
\left({\disp 1\over \disp 2^{\dm-1}}\right)^2\sum_{\gam\in\pi\set}
\left(-2^{\dm-1}+\sum_{\nu\in\set}\overline{R((\ome+\gam)\cdot\nu)}\right)\cdot\\
&&\left({\disp 1\over \disp
2}+\sum_{\tilde\nu\in\set'}\left(\tilde{R}((\ome+\gam)\cdot\tilde\nu)-{\disp 1\over
\disp 2}\right)\right)=1,\quad\forall\ome\in\Td\\
\Longleftrightarrow&&
\sum_{\gam\in\pi\set}
\Bigg(1-2^{\dm-1}+\sum_{\nu\in\set'}e^{i\gam\cdot\nu}\overline{R^{o}(\ome\cdot\nu)}
+\sum_{\nu\in\set'}\overline{R^{e}(\ome\cdot\nu)}\Bigg)\cdot\\
&&
\left({\disp 1\over \disp 2}
+\sum_{\tilde\nu\in\set'}e^{-i\gam\cdot\tilde\nu}\tilde{R}^{o}(\ome\cdot\tilde\nu)\right)=(2^{\dm-1})^2,\quad\forall\ome\in\Td\\
\Longleftrightarrow&&
2^{\dm-1}(1-2^{\dm-1})+2^{\dm-1}\sum_{\nu\in\set'}\overline{R^{e}(\ome\cdot\nu)}\\
&&+2^\dm\sum_{\nu\in\set'}\overline{R^{o}(\ome\cdot\nu)}\tilde{R}^{o}(\ome\cdot\nu)=(2^{\dm-1})^2,\quad\forall\ome\in\Td\\
\Longleftrightarrow&&
\overline{R^{o}(\ome_1)}\tilde{R}^{o}(\ome_1)={\disp 1\over \disp 2}-{\disp 1\over \disp
2}\overline{R^{e}(\ome_1)},\quad\forall\ome_1\in\TT.
\eeaN
Therefore, $\mathcal{C}_\dm[R]$ and $\mathcal{C}_\dm[\tilde{R}]$ are biorthogonal if and only if $R$ and $\tilde{R}$ are biorthogonal.

\subsubsection{Proof of part (c)}
Let $R$ be a univariate interpolatory refinement mask with accuracy number $m$. First let us prove the accuracy number of $\mathcal{C}_\dm[R]$ is at least $m$.
Since $R$ has accuracy number $m$,
\be
\label{eq:Ratpi}
(D^kR)(\pi)=0,\; \forall 0\le k\le m-1, \:\hbox{ and } \: (D^{m}R)(\pi)\ne 0.
\ee
Furthermore, since $R$ is interpolatory, $1-R(\ome)=R(\ome+\pi)$ holds for all $\ome\in\TT$. Hence $(D^k(1-R))(0)=(D^kR)(\pi)$ for all $k\in\NN_0:=\NN\cup\{0\}$. Thus $1-R$ has a zero of order $m$ at the origin, i.e.
\bea
R(0)&{\,=\,}&1\nonumber\\
(D^kR)(0)&=&0,\quad \forall 1\le k\le m-1\label{eq:Ratzero}\\
(D^mR)(0)&\ne& 0.\nonumber
\eea

Now consider the $\dm$-D refinement mask $\mathcal{C}_\dm[R]$. The accuracy number of $\mathcal{C}_\dm[R]$ is at least one, i.e. $\mathcal{C}_\dm[R](\gam)=0$, for all $\gam\in\pi\set'$. To see this, we need the dual identities of (\ref{eq:charrunpiset}):
\be
\label{eq:charrunset}
\sum_{\nu\in\set}e^{-i\nu\cdot\gam}
  =\cases{2^\dm,& if $\gam=0$,\cr
          0,             & if $\gam\in\pi\set'$.\cr}
\ee
From (\ref{eq:charrunset}), we can read off
\be
\label{eq:congruent}
\#\{\nu\in\set':\gam\cdot\nu\equiv\pi\,(\mod 2\pi\ZZ)\}=2^{\dm-1},
\ee
for all $\gam\in\pi\set'$. In particular, the left-hand side of
(\ref{eq:congruent}) is independent of
$\gam$. We then have for any $\gam\in\pi\set'$
\beaN
&&2^{\dm-1}\mathcal{C}_\dm[R](\gam)= -2^{\dm-1}
+\sum_{\nu\in\set}R(\gam\cdot\nu)\\
&=&1-2^{\dm-1}
+\sum_{\{\nu\in\set':\gam\cdot\nu\equiv 0\}}R(\gam\cdot\nu)
+\sum_{\{\nu\in\set':\gam\cdot\nu\equiv\pi\}}R(\gam\cdot\nu)\\
&=&0,
\eeaN
where $\equiv$ in the second line is used to denote congruence in modulo $2\pi\ZZ$, and the last equality is from the conditions $R(0)=1$, $R(\pi)=0$ and the identity
(\ref{eq:congruent}).
Furthermore, for all $\gam\in\pi\set'$ and for all $\mu\in\NN_0^\dm$ with $1\le \onenorm{\mu}\le m-1$ ($\onenorm{\mu}:=\mu_1+\cdots+\mu_\dm$)
\beaN
&&(D^\mu\mathcal{C}_\dm[R])(\gam)
={1\over 2^{\dm-1}}\sum_{\nu\in\set'}(D^\mu[R(\ome\cdot\nu)])_{|\ome=\gam}\\
&&={1\over 2^{\dm-1}}\sum_{\nu\in\set'}\left(\prod_{j=1}^\dm
\nu_j^{\;\mu_j}\right)
(D^{\onenorms{\mu}}R)(\gam\cdot\nu)=0,\nonumber
\eeaN
where the last equality is from the identities (\ref{eq:Ratpi}) and
(\ref{eq:Ratzero}). Therefore the accuracy number of $\mathcal{C}_\dm[R]$ is at least $m$.

Next we prove the accuracy number of $\mathcal{C}_\dm[R]$ is exactly $m$ by contradiction. Suppose the accuracy number of $\mathcal{C}_\dm[R]$ is $m+l$ with $l\ge 1$. Then
\beaN
&&(D^\mu\mathcal{C}_\dm[R])(\gam)=0,\\
\forall \gam\in\pi\set' \hbox{ and } &&\forall \mu\in\NN_0^\dm \hbox{ with } 0\le\onenorm{\mu}\le m+l-1.
\eeaN
Since the univariate interpolatory $R$ and the multivariate interpolatory
$\mathcal{C}_\dm[R]$ are connected as follows:
$$
R(\ome)=\mathcal{C}_\dm[R](\ome,0,\cdots,0),\quad \forall \ome\in\TT,
$$
we have
$(D^kR)(\pi)=D^{(k,0,\cdots,0)}\mathcal{C}_\dm[R](\pi,0,\cdots,0)=0$ for
all $0\le k\le m+l-1$. Hence the accuracy number of $R$ is at least $m+l$, which contradicts to the given assumption. Therefore the accuracy number
of $\mathcal{C}_\dm[R]$ has to be $m$.

\subsection{Proof of Theorem~\ref{thm:cosetsumwavelet}}
\label{subS:proofofThm2}
In this subsection we prove Theorem~\ref{thm:cosetsumwavelet}. In the proof we use the concepts of
Compression-Alignment-Prediction (CAP) and Compression-Alignment-Modified-Prediction (CAMP)
\cite{HR1}. CAMP is a variant of CAP, and CAP is a generalization of the Laplacian
pyramid \cite{BA}. In particular, CAP without alignment operator is the same as Laplacian pyramid. It is well known that Laplacian pyramid has a trivial reconstruction algorithm of reversing the steps in its decomposition algorithm.
Both CAP and CAMP are originally designed for the redundant wavelet construction, and CAMP is introduced in order to achieve a better
space localization than CAP.

Given $\tau:=\mathcal{C}_\dm[S]$, $\tau^\du:=\mathcal{C}_\dm[U]$ with interpolatory $U$, and $t_\nu(\ome):=e^{-i\nu\cdot\ome}\overline{U(\ome\cdot\nu+\pi)}$, $\ome\in\Td$, $\nu\in \set'$, we want to show that there exist dual wavelet masks $t_\nu^\du(\ome)$ such that $(\tau,(t_\nu)_{\nu\in\set'})$ and $(\tau^\du,(t_\nu^\du)_{\nu\in\set'})$ satisfy the MUEP conditions in (\ref{eq:mUEP}).

To show this, first let us construct another pair of wavelet masks $(\tau_\nu)_{\nu\in\set}$ and dual wavelet masks $(\tau_\nu^\du)_{\nu\in\set}$, which we know for sure satisfy the MUEP conditions with $\tau$ and $\tau^\du$.

First extend the definition of $t_\nu$ by defining $t_0$:
$$
t_\nu(\ome):=\cases{{\disp1\over\disp2}(1-\tau(\ome)),&if $\nu=0$,\cr e^{-i\nu\cdot\ome}\overline{U(\ome\cdot\nu+\pi)},&if $\nu\in\set'$.\cr}
$$
Then from \cite{HR1} we know that
\be
t_\nu(\ome)=2^{{n\over2}-1}\cdot t_{-\nu}^{CAMP}(\ome),\quad\nu\in\set,
\label{eq:t_nu}
\ee
where $t_\nu^{CAMP}$ is the CAMPlet mask in Section 2.3 of \cite{HR1}.

Furthermore by comparing the CAPlet masks in Lemma 2.2 of \cite{HR1} with the CAMPlet masks, it is easy to see that they are related as
\be
t_\nu^{CAP}(\ome)-t_\nu^{CAMP}(\ome)=\cases{0,&if $\nu=0$,\cr
f_{-\nu}(\ome)t_0^{CAMP}(\ome),&if $\nu\in\set'$,\cr}
\label{eq:CAPCAMP}
\ee
where $f_\nu(\ome)=e^{-i\nu\cdot\ome}\sum_{\gam\in\pi\set}{e^{-i\nu\cdot\gam}\overline{\tau^\du(\ome+\gam)}}$. Here it is necessary to point out that
$f_\nu$ is $\pi$-periodic, i.e. $f_\nu(\ome+\gam)=f_\nu(\ome)$, for any $\gam\in\pi\set$.

Now define $(\tau_\nu)_{\nu\in\set}$
\be
\tau_\nu(\ome):=2^{{n\over2}-1}t_{-\nu}^{CAP}(\ome),\quad\nu\in\set.
\label{eq:tau_nu}
\ee
Then since CAP without alignment operator is the same as Laplacian pyramid, and Laplacian pyramid has a trivial reconstruction, we know that with
$$
\tau_\nu^\du(\ome):=\cases{2^{1-n},&if $\nu=0$,\cr
2^{1-n}e^{-i\nu\cdot\ome},&if $\nu\in\set'$,\cr}
$$
$(\tau,(\tau_\nu)_{\nu\in\set})$ and $(\tau^\du,(\tau_\nu^\du)_{\nu\in\set})$ satisfy the MUEP conditions.

Next, we start from the MUEP conditions of $(\tau,(\tau_\nu)_{\nu\in\set})$ and $(\tau^\du,(\tau_\nu^\du)_{\nu\in\set})$ to find our dual wavelet masks $t_\nu^\du$. To do that, we need three more identities. The first one is a simple observation that can be obtained from (\ref{eq:t_nu}), (\ref{eq:CAPCAMP}) and (\ref{eq:tau_nu}):
\be
\tau_\nu(\ome)-t_\nu(\ome)=\cases{0,&if $\nu=0$,\cr f_\nu(\ome)t_0(\ome),&if $\nu\in\set'$.\cr}
\label{eq:tau_nu-t-nu}
\ee
The second one can be derived from the interpolatory property of $\tau^\du$ and the identities (\ref{eq:charrunset}):
\be
\tau^\du_0(\ome)+\sum_{\nu\in\set'}\overline{f_\nu(\ome)}\tau^\du_\nu(\ome)=2\tau^\du(\ome).
\label{eq:f_nu}
\ee
After defining $g_\nu(\ome):=e^{-i\nu\cdot\ome}\sum_{\gam\in\pi\set}e^{-i\nu\cdot\gam}\overline{\tau(\ome+\gam)}$, the third identity:
\be
t_0(\ome)+2^{-n}\sum_{\nu\in\set'}t_\nu(\ome)\overline{g_\nu(\ome)}=0
\label{eq:g_nu}
\ee
can be shown from the biorthogonality between $\tau$ and $\tau^\du$ and the identities (\ref{eq:charrunset}).
Finally from the above identities (\ref{eq:tau_nu-t-nu}), (\ref{eq:f_nu}) and (\ref{eq:g_nu}), 
with 
$$\delta_{\gamma 0}
:=\cases{
1,&if $\gam=0$,\cr 0,&if $\gam\in \pi\set'$,\cr}$$
we get
\beaN
&{}&\delta_{\gam0}\\
&=&\overline{\tau(\ome+\gam)}\tau^\du(\ome)+\overline{\tau_0(\ome+\gam)}\tau_0^\du(\ome)
+\sum_{\nu\in\set'}\overline{\tau_\nu(\ome+\gam)}\tau_\nu^\du(\ome)\\
&=&\overline{\tau(\ome+\gam)}\tau^\du(\ome)+\overline{t_0(\ome+\gam)}\tau_0^\du(\ome)\\
& &+\sum_{\nu\in\set'}
\overline{f_\nu(\ome+\gam)t_0(\ome+\gam)+t_\nu(\ome+\gam)}\tau_\nu^\du(\ome)\\
&=&\overline{\tau(\ome+\gam)}\tau^\du(\ome)\\
& &+\overline{t_0(\ome+\gam)}\Bigg(\tau_0^\du(\ome)+\sum_
{\nu\in\set'}\overline{f_\nu(\ome+\gam)}\tau_\nu^\du(\ome)\Bigg)\\
& &+\sum_{\nu\in\set'}\overline{t_\nu
(\ome+\gam)}\tau_\nu^\du(\ome)\\
&=&\overline{\tau(\ome+\gam)}\tau^\du(\ome)+\overline{t_0(\ome+\gam)}2\tau^\du(\ome)+\sum_{\nu\in\set'}
\overline{t_\nu(\ome+\gam)}\tau_\nu^\du(\ome)\\
&=&\overline{\tau(\ome+\gam)}\tau^\du(\ome)\\
& &+\left(\overline{t_0(\ome+\gam)}+2^{-\dm}\sum_{\nu\in
\set'}\overline{t_\nu(\ome+\gam)}g_\nu(\ome+\gam)\right)2\tau^\du(\ome)\\
& &-2^{-\dm}\sum_{\nu\in\set'}\overline{t_\nu(\ome+\gam)}g_\nu(\ome+\gam)2\tau^\du(\ome)\\
& &+\sum_{\nu\in\set'}\overline{t_\nu
(\ome+\gam)}\tau_\nu^\du(\ome)\\
&=&\overline{\tau(\ome+\gam)}\tau^\du(\ome)
-2^{1-\dm}\sum_{\nu\in\set'}\overline{t_\nu(\ome+\gam)}g_\nu(\ome+\gam)\tau^\du(\ome)\\
& &+\sum_{\nu\in\set'}\overline{t_\nu
(\ome+\gam)}\tau_\nu^\du(\ome)\\
&=&\overline{\tau(\ome+\gam)}\tau^\du(\ome)\\
& &+\sum_{\nu\in\set'}\overline{t_\nu(\ome+\gam)}
\left(-2^{1-\dm}g_\nu(\ome)\tau^\du(\ome)+\tau_\nu^\du(\ome)\right).
\eeaN

Therefore, by letting $t_\nu^\du:=-2^{1-n}g_\nu\tau^\du+\tau_\nu^\du$, we find the dual wavelet masks
$t_\nu^\du$, $\nu\in\set'$, such that $(\tau,(t_\nu)_{\nu\in\set'})$ and $(\tau^\du,(t_\nu^\du)_{\nu\in\set'})$ satisfy the MUEP conditions.

\subsection{Proof of the identity in Step {\tt (iii)} of the coset sum algorithm in \S\ref{subS:Algorithms}}
\label{subS:stepiii}
In this subsection we verify the identity in Step {\tt (iii)} of Reconstruction Algorithm in \S\ref{subS:Algorithms}. We use the same notation as in the algorithm. In particular, $G$ and $H$ are univariate refinement filters associated with biorthogonal refinement masks $S$ and $U$, respectively, and $H$ is interpolatory. 

From Step {\tt (i)}  of the algorithm in \S\ref{subS:Algorithms}, we know that, with $a_G=2^\dm-(2^\dm-1)(2-G(0))$,
\bea
a_G\,y_j(2k)&=&2^\dm y_{j-1}(k)-\sum_{\nu\in\set'}\sum_{L\in\ZZ\bks0}G(L)y_j(2k+L\nu)\nonumber\\
&=&2^\dm y_{j-1}(k)-\sum_{\nu\in\set'}\sum_{L \equiv 1}G(L)y_j(2k+L\nu)\nonumber\\
&{}&-\sum_{\nu\in\set'}\sum_{L\equiv 0, L\ne 0}G(L)y_j(2k+L\nu)
\label{eq:y2k}
\eea
where $\equiv$ is used to denote congruence in modulo $2\ZZ$. Since the masks $S$ and $U$ are biorthogonal, from (\ref{eq:refinebiormask}) and the connection between the filter and the mask, it is easy to see that the associated filters $G$ and $H$ satisfy the following condition:
\beaN
\sum_{m\in\ZZ}G(L+m)H(m)
=\cases{
0,&if $L\equiv 0, L\ne 0$,\cr 2,&if $L=0$.\cr}
\eeaN
Combining this with the fact that $H$ is interpolatory leads to
$$\sum_{m\equiv1}G(L+m)H(m)
=\cases{
0-G(L),&if $L\equiv 0, L\ne 0$,\cr 2-G(0),&if $L=0$.\cr}$$
From this and the change of variables, we see that 
\beaN
&&\sum_{\nu\in\set'}\sum_{L\equiv 0, L\ne 0}G(L)y_j(2k+L\nu)\\
&=&\sum_{\nu\in\set'}\sum_{L\equiv 0, L\ne 0}\left(0-\sum_{m\equiv1}G(L+m)H(m)\right)y_j(2k+L\nu)\\
&=&-\sum_{\nu\in\set'}\sum_{m\equiv1}\sum_{L\equiv 0}G(L+m)H(m)y_j(2k+L\nu)\\
&&+\sum_{\nu\in\set'}\sum_{m\equiv1}G(m)H(m)y_j(2k)
\\
&=&-\sum_{\nu\in\set'}\sum_{m\equiv1}\sum_{n\equiv 1}G(n)H(m)y_j(2k+(n-m)\nu)\\
&&+(2^\dm-1)(2-G(0))y_j(2k)\\
&=&-\sum_{\nu\in\set'}\sum_{L\equiv1}\sum_{m\equiv1}G(L)H(m)y_j(2k+(L-m)\nu)\\
&&+(2^\dm-1)(2-G(0))y_j(2k)
\eeaN
By substituting this result to (\ref{eq:y2k}) and solving for $y_j(2k)$, we obtain
\beaN
y_j(2k)=y_{j-1}(k)-{1\over 2^{\dm-1}}\sum_{\nu\in\set'}\sum_{L\in\ZZ}G(2L+1)w_{\nu,j-1}(k+L\nu)
\eeaN
as desired.

\section*{Acknowledgment}
The authors thank the editor and referees for suggestions that improved the clarity and accessibility of this article. 

\bibliographystyle{IEEEtran} 
\bibliography{IEEEabrv,mybibfile}

\newpage

\begin{IEEEbiographynophoto}{Youngmi Hur} (M'11) received
the B.S. and M.S. degrees in mathematics from Korea Advanced Institute of Science and Technology, Daejeon, South Korea, in 1997 and 1999, respectively, and the Ph.D. degree in mathematics from University of Wisconsin at Madison in 2006.

From 2006 to 2008, she was a C.L.E. Moore instructor at the Department of Mathematics of the Massachusetts Institute of Technology, Cambridge, MA. Since 2008, she is an Assistant Professor at the Department of Applied Mathematics and Statistics of the Johns Hopkins University, Baltimore, MD. Her research interests are in the field of applied and computational harmonic analysis including wavelets and their applications.
\end{IEEEbiographynophoto}

\vfill

\begin{IEEEbiographynophoto}{Fang Zheng} is currently a Ph.D candidate in the Department of Applied Mathematics and Statistics at Johns Hopkins University, under the supervision of Prof. Youngmi Hur. Her research interests include multivariate wavelet system and its applications. Fang Zheng received a Bachelors degree in Mathematics from Beijing Forestry University, China in 2008, and a Masters degree in Applied Mathematics and Statistics from Johns Hopkins University in 2010.
\end{IEEEbiographynophoto}
\vfill

\end{document}